%
\documentclass[12pt, reqno]{amsart}
\usepackage{amsmath, amsthm, amscd, amsfonts, amssymb, graphicx, color, mathrsfs}
\usepackage[bookmarksnumbered, colorlinks, plainpages]{hyperref}

\textheight 22.5truecm \textwidth 14.5truecm
\setlength{\oddsidemargin}{0.35in}\setlength{\evensidemargin}{0.35in}

\setlength{\topmargin}{-.5cm}

\newtheorem{theorem}{Theorem}[section]
\newtheorem{lemma}[theorem]{Lemma}
\newtheorem{proposition}[theorem]{Proposition}
\newtheorem{corollary}[theorem]{Corollary}
\theoremstyle{definition}

\theoremstyle{remark}
\newtheorem{remark}[theorem]{Remark}
\numberwithin{equation}{section}

\begin{document}
\setcounter{page}{1}

\title[boundedness of periodic pseudo-differ. operators ]{On the boundedness of periodic pseudo-differential operators}

\author[Duv\'an Cardona]{Duv\'an Cardona$^1$}

\address{$^{1}$ Department of Mathematics, Universidad de los Andes,Bogot\'a- Colombia.}
\email{\textcolor[rgb]{0.00,0.00,0.84}{d.cardona@uniandes.edu.co;
duvanc306@gmail.com}}

\subjclass[2010]{Primary 58J40; Secondary 35S05, 42B05.}

\keywords{ $L^p$-spaces, pseudo-differential operators, torus, Fourier series, microlocal analysis .}

\date{Received:;  Revised:; Accepted:;
\newline \indent $^{1}$ Universidad de los Andes, Bogot\'a - Colombia}

\begin{abstract}
In this paper we investigate the $L^p$-boundedness of certain classes of periodic pseudo-differential operators. The operators considered arise from the study of symbols on $\mathbb{T}^n\times\mathbb{Z}^n$ with limited regularity.  
\end{abstract} \maketitle

\section{Introduction}

In this work we obtain $L^p-$boundedness theorems for pseudo-differential operators with symbols defined  on $\mathbb{T}^n\times\mathbb{Z}^n.$  For some recent work on boundedness results of periodic pseudo-differential operators in $L^p$-spaces we refer the reader to \cite{Duvan3,Profe, s2, Ruz, Ruz-2}. Pseudo-differential operators on $\mathbb{R}^n$ are  generalizations of differential operators and singular integrals. They are formally defined by
\begin{center}$T_{\sigma}u(x)=\int e^{i2\pi\langle x,\eta \rangle}\sigma(x,\eta)\widehat{u}(\eta)d\eta.$
\end{center}
The function $\sigma$ is usually called the symbol of the corresponding operator $T_{\sigma}.$ Symbols are classified according to their behavior and the behavior of their derivatives. For $m\in \mathbb{R}$ and  $0\leq \rho,\delta\leq 1,$ the $(\rho,\delta)-$H\"ormander class $S^{m}_{\rho,\delta}(\mathbb{R}^n\times \mathbb{R}^n)$ consists of those functions which are smooth in $(x,\eta)$ and which satisfy symbols inequalities
\begin{center}$|\partial^{\beta}_{x}\partial_{\eta}^{\alpha}\sigma(x,\eta)|\leq C_{\alpha,\beta}\langle \eta\rangle^{m-\rho|\alpha|+\delta|\beta|}.$
\end{center}
The corresponding set of operators with symbols in $(\rho,\delta)-$classes will be denoted by $\Psi^{m}_{\rho,\delta}(\mathbb{R}^n\times \mathbb{R}^n).$ A remarkable result due to A. P. Calder\'on and R. Vaillancourt, gives us that a pseudo-differential operator with symbol in $S^{0}_{\rho,\rho}(\mathbb{R}^n\times \mathbb{R}^n)$ for some $0\leq \rho<1$ is bounded on $L^2(\mathbb{R}^n),$ (see \cite{ca1, ca2}). Their result is false when $\rho=1:$ there exists symbols in $S^{0}_{1,1}(\mathbb{R}^n\times \mathbb{R}^n)$ whose associated pseudo-differential operators are  not bounded on $L^2(\mathbb{R}^n),$ (see  \cite{Duo}). For general $1<p<\infty$, we have the following theorem (see Fefferman \cite{fe}): if $m\leq -m_{p}= -n(1-\rho)|\frac{1}{p}-\frac{1}{2}|,$ $m_{p}<n(1-\rho)/2,$ then $T_{\sigma}\in \Psi^{m}_{\rho,\delta}(\mathbb{R}^n\times \mathbb{R}^n)$ is a $L^p$- bounded operator. It is well known that for every $m>m_p$ there exists $T_{\sigma}\in \Psi^{-m}_{\rho,\delta}(\mathbb{R}^n\times \mathbb{R}^n)$ which is not bounded on $L^{p}(\mathbb{R}^n).$  The historical development of the problem about the $L^p$-boundedness of pseudo-differential operators on $\mathbb{R}^n$ can be found in \cite{Na3,LW}.\\
\\
Pseudo-differential operators with symbols in H\"ormander classes can be defined on $C^{\infty}$- manifolds by using local charts. In 1979, Agranovich (see \cite{ag}) gives a global definition of pseudo-differential operators on the circle $\mathbb{S}^1,$ (instead of the local formulation on the circle as a manifold). By using the Fourier transform Agranovich's definition was readily generalizable to the $n-$dimensional torus $\mathbb{T}^n.$   It is a non-trivial result, that the definition of pseudo-differential operators with symbols on $(\rho,\delta)-$classes by Agranovich and H\"ormander are equivalent. This fact is known as the equivalence theorem of McLean (see \cite{Mc}).  Important consequences of this equivalence are the following periodic versions of the Calder\'on-Vaillancourt theorem and the Fefferman theorem:

\begin{theorem}\label{TFC}
Let $0\leq \delta<\rho\leq 1$ and let $a:\mathbb{T}^n\times \mathbb{Z}^n\rightarrow \mathbb{C}$ be a symbol with corresponding periodic pseudo-differential operator defined on $C^{\infty}(\mathbb{T}^n)$ by
\begin{equation}
a(x,D)f(x)=\sum_{\xi\in\mathbb{Z}^n } e^{ i2\pi \langle x,\xi \rangle}a(x,\xi)\widehat{f}(\xi).
\end{equation}
If $\alpha, \beta\in\mathbb{N}^n,$ under one of the following two conditions,
\begin{itemize}
\item[1.]$($Calderon-Vaillancourt condition, periodic version$).$ $$|\partial_{x}^{\beta}\Delta_{\xi}^{\alpha}a(x,\xi)  |\leq C_{\alpha,\beta} \langle \xi\rangle^{\rho(|\beta|-|\alpha|)},\,\,\,(x,\xi)\in\mathbb{T}^n\times \mathbb{Z}^n,$$
\item[2.]$($Fefferman condition, periodic version$).$ $$|\partial_{x}^{\beta}\Delta_{\xi}^{\alpha}a(x,\xi)  |\leq C_{\alpha,\beta} \langle \xi\rangle^{-m_{p}-\rho|\alpha|+\delta|\beta| },\,\,\,(x,\xi)\in\mathbb{T}^n\times \mathbb{Z}^n,\,\delta<1,$$  the operator $a(x,D)$ is bounded on $L^p(\mathbb{T}^n)$, for all  $1<p<\infty.$
\end{itemize}
\end{theorem}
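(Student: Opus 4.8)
The plan is to deduce both statements from the Euclidean theorems of Calderón--Vaillancourt and Fefferman recalled above, with McLean's equivalence theorem serving as the bridge between the toroidal and the Euclidean symbol calculus. The two hypotheses are precisely the toroidal descriptions of Hörmander classes. Indeed, writing $\rho(|\beta|-|\alpha|)=0-\rho|\alpha|+\rho|\beta|$, condition 1 is the toroidal form of membership in $S^0_{\rho,\rho}(\mathbb{R}^n\times\mathbb{R}^n)$, whereas condition 2, with exponent $-m_p-\rho|\alpha|+\delta|\beta|$, is the toroidal form of membership in $S^{-m_p}_{\rho,\delta}(\mathbb{R}^n\times\mathbb{R}^n)$. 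In this dictionary the forward differences $\Delta_\xi^\alpha$ on $\mathbb{Z}^n$ occupy the place of the derivatives $\partial_\eta^\alpha$ on $\mathbb{R}^n$.

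First I would invoke McLean's theorem to associate with the discrete symbol $a$ a Euclidean symbol $\tilde a$ on $\mathbb{R}^n\times\mathbb{R}^n$, lying in the corresponding Hörmander class and inducing the same operator on periodic functions. For condition 2 the conclusion is then immediate: since $\tilde a\in S^{-m_p}_{\rho,\delta}$ with $\delta<1$ and order exactly $-m_p$, Fefferman's theorem gives boundedness of $T_{\tilde a}$ on $L^p(\mathbb{R}^n)$ for every $1<p<\infty$. For condition 1 (where necessarily $\rho<1$) the Calderón--Vaillancourt theorem applies to $\tilde a\in S^0_{\rho,\rho}$ and yields $L^2(\mathbb{R}^n)$ boundedness. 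In either case the final step is a transference argument: restricting $T_{\tilde a}$ to $\mathbb{T}^n$-periodic distributions recovers $a(x,D)$, so the Euclidean bound descends to the torus---$L^p(\mathbb{T}^n)$ under condition 2 and $L^2(\mathbb{T}^n)$ under condition 1.

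The hard part is twofold. The technical core is making McLean's correspondence quantitative, that is, verifying that the finite-difference estimates of the hypotheses translate into the smooth derivative estimates defining $S^m_{\rho,\delta}$, with the stated orders and uniformly in the multi-indices $\alpha,\beta$; this is where the equivalence theorem does its real work, since replacing $\Delta_\xi$ by $\partial_\eta$ is not purely formal and calls for discrete Taylor expansion or summation-by-parts control. The more delicate conceptual point concerns condition 1: Calderón--Vaillancourt alone delivers only $L^2$ for the order-zero class $S^0_{\rho,\rho}$, which is exactly the borderline regime in which $m_p$ quantifies the loss of $L^p$ boundedness for $p\neq 2$. Reaching the whole range $1<p<\infty$ here is the genuine obstacle, and I would expect it to require either incorporating the decay $\langle\xi\rangle^{-m_p}$ into the hypothesis or a truly periodic $L^p$ estimate going beyond the Euclidean citations.
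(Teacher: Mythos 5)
Your proposal follows exactly the route the paper itself takes: the paper offers no detailed argument for Theorem \ref{TFC} beyond the single remark that it ``is a consequence of the McLean equivalence theorem, the Calder\'on--Vaillancourt theorem and the Fefferman theorem,'' which is precisely your McLean-transference-plus-Euclidean-theorems scheme. Your closing caveat is also well taken: under condition 1 the cited ingredients yield only $L^2(\mathbb{T}^n)$ boundedness, so the blanket conclusion ``bounded on $L^p(\mathbb{T}^n)$ for all $1<p<\infty$'' should be read as attaching to condition 2 (as the paper's own reliance on Calder\'on--Vaillancourt confirms), rather than as a gap in your argument.
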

\noindent Theorem \ref{TFC} is a consequence of the McLean equivalence theorem, the Calder\'on-Vaillancourt theorem and the  Fefferman theorem. Moreover, from the proofs of these results it is
easy to see that symbols with limited regularity satisfying conditions in Theorem \ref{TFC} give rise to $L^p$-bounded pseudo-differential operators (under the condition $0\leq \delta<\rho\leq 1$) (see \cite{va}).  With this in mind, in this paper we provide  some new results about the $L^p$ boundedness of periodic pseudo-differential operators associated to toroidal symbols with limited regularity. 
Now, we first recall some  recent $L^p$-theorems  of periodic operators and later we present our results. 

\begin{theorem}\cite[Wong-Molahajloo]{s2}. Let $a(x,D)$ be a periodic pseudo-differential operator on $\mathbb{T}^1\equiv\mathbb{S}^1 $. If  $\sigma:\mathbb{T}\times\mathbb{Z}\rightarrow \mathbb{C}$  satisfies 
\begin{equation}
|\partial_{x}^{\beta}\Delta_{\xi}^{\alpha}a(x,\xi)  |\leq C_{\alpha,\beta}\langle \xi\rangle^{-|\alpha|},\,\,\,(x,\xi)\in\mathbb{T}^n\times \mathbb{Z}^n,
\end{equation}
with $|\alpha|,|\beta|\leq 1,$ then, $a(x,D):L^p(\mathbb{T}^1)\rightarrow L^p(\mathbb{T}^1)$ is a bounded operator for all $1<p<\infty$.
\end{theorem}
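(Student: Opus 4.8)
The plan is to prove the $L^p$-boundedness by Calder\'on--Zygmund theory on the circle $\mathbb{S}^1$, viewed as a space of homogeneous type. This route is forced by the fact that we are at the endpoint $\rho=1$: there the Fefferman condition of Theorem \ref{TFC} degenerates, since $m_p=n(1-\rho)|\tfrac1p-\tfrac12|=0$ and the strict inequality $m_p<n(1-\rho)/2=0$ fails, so that theorem is unavailable. I would first record the kernel representation
\[
a(x,D)f(x)=\int_{\mathbb{T}}K(x,y)f(y)\,dy,\qquad K(x,y)=\sum_{\xi\in\mathbb{Z}}e^{i2\pi\langle x-y,\xi\rangle}a(x,\xi),
\]
the inner sum being interpreted distributionally, with a Dirac mass on the diagonal $x=y$ and a locally integrable off-diagonal part.

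The first ingredient is $L^2(\mathbb{T}^1)$-boundedness. The hypotheses $|\partial_x^{\beta}\Delta_\xi^{\alpha}a|\le C_{\alpha,\beta}\langle\xi\rangle^{-|\alpha|}$ with $|\alpha|,|\beta|\le 1$ are exactly the regularity consumed by the Calder\'on--Vaillancourt theorem in dimension $n=1$ (its proof needs only $\lfloor n/2\rfloor+1=1$ difference and derivative, and it remains valid at $\rho=1$, unlike Fefferman's theorem); I would invoke its periodic form through the McLean equivalence. Alternatively one gets the $L^2$ bound directly by a Cotlar--Stein almost-orthogonality argument for the dyadic pieces $T_j=a_j(x,D)$ with $\operatorname{supp}_\xi a_j\subset\{|\xi|\sim 2^{j}\}$, the decay $\|T_j^{*}T_{j'}\|+\|T_jT_{j'}^{*}\|\lesssim 2^{-|j-j'|}$ being furnished by the $x$-regularity $|\partial_x a|\le C$.

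The heart of the matter, and the step I expect to be the main obstacle, is the Hörmander regularity condition in the second variable,
\[
\sup_{y,y_0}\int_{|x-y_0|\ge 2|y-y_0|}\bigl|K(x,y)-K(x,y_0)\bigr|\,dx\le C.
\]
The naive route---summation by parts in $\xi$ to extract the factor $(1-e^{i2\pi\langle z,\xi\rangle})^{-1}\sim|z|^{-1}$ together with $|\Delta_\xi a|\le C\langle\xi\rangle^{-1}$---\emph{fails}, because the first difference is only $\langle\xi\rangle^{-1}$, which is not summable, so no pointwise kernel bound follows, and a second difference is not available. The key observation is that $\langle\xi\rangle^{-1}$ is nonetheless square-summable, $\sum_\xi\langle\xi\rangle^{-2}<\infty$, so I would replace $L^1$ pointwise estimates by an $L^2$/Plancherel argument in the spirit of H\"ormander's multiplier theorem (for $n=1$ the critical smoothness is only $s>\tfrac12$, comfortably below the one full difference assumed here). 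Writing $K(x,y)-K(x,y_0)=\sum_\xi\bigl(e^{-i2\pi\langle y,\xi\rangle}-e^{-i2\pi\langle y_0,\xi\rangle}\bigr)a(x,\xi)e^{i2\pi\langle x,\xi\rangle}$, I would decompose $\{|x-y_0|\ge 2|y-y_0|\}$ into dyadic shells, apply Cauchy--Schwarz on each shell to pass from the required $L^1_x$ bound to an $L^2_x$ bound, and expand the $L^2_x$-norm by orthogonality. The resulting double sum over $(\xi,\xi')$ closes: the factor $e^{-i2\pi\langle y,\xi\rangle}-e^{-i2\pi\langle y_0,\xi\rangle}$ supplies the smallness $\min(1,|\xi|\,|y-y_0|)$, square-summability of the first difference controls the diagonal, and the single $x$-derivative, through $\int_{\mathbb{T}}a(x,\xi)\overline{a(x,\xi')}\,e^{i2\pi\langle x,\xi-\xi'\rangle}\,dx=O(|\xi-\xi'|^{-1})$, produces the off-diagonal decay needed to sum in $\xi\ne\xi'$ and across the shells. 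Balancing the shell contributions is the delicate point, and is precisely where the two first-order hypotheses (one difference in $\xi$, one derivative in $x$) must be used \emph{simultaneously}.

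Finally, with the $L^2$ bound and this Hörmander condition in hand, the Calder\'on--Zygmund theorem on $(\mathbb{S}^{1},|\cdot|,dx)$ yields the weak $(1,1)$ estimate, hence $L^p$-boundedness for $1<p\le 2$ by Marcinkiewicz interpolation with $L^2$. For $2\le p<\infty$ I would verify the dual (first-variable) Hörmander condition and conclude by duality; this condition follows from the same analysis once one splits $a(y,\xi)e^{i2\pi\langle y,\xi\rangle}-a(y_0,\xi)e^{i2\pi\langle y_0,\xi\rangle}=[a(y,\xi)-a(y_0,\xi)]e^{i2\pi\langle y,\xi\rangle}+a(y_0,\xi)\bigl[e^{i2\pi\langle y,\xi\rangle}-e^{i2\pi\langle y_0,\xi\rangle}\bigr]$, the first term being tamed by the Lipschitz bound $|a(y,\xi)-a(y_0,\xi)|\le C|y-y_0|$ coming from $|\partial_x a|\le C$ and the second reducing to the multiplier regularity of the frozen symbol $a(y_0,\cdot)$ already controlled above. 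This gives boundedness of $a(x,D)$ on $L^p(\mathbb{T}^1)$ for every $1<p<\infty$.
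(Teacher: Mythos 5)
The paper does not actually prove this statement: it is quoted from Molahajloo--Wong \cite{s2} as background. The route the paper itself would take (and does take for the more general Theorem IB, which with $n=1$ and $\rho=1$ has exactly these hypotheses and this conclusion) is entirely different from yours: Sobolev embedding in the $x$-variable plus Fubini reduces matters to the frozen multipliers $\sigma_{z,\beta}(\xi)=\partial_z^{\beta}\sigma(z,\xi)$, which are handled by extending the one available difference to one derivative on $\mathbb{R}$ (Proposition \ref{eqcc}), applying the one-dimensional Mikhlin--H\"ormander theorem, and transferring back to the torus by Proposition \ref{stein}. That argument uses precisely $[\tfrac n2]+1=1$ difference and $[\tfrac np]+1=1$ derivative and never needs a weak $(1,1)$ estimate or a kernel bound.

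Your Calder\'on--Zygmund route has a genuine gap at its central step, the H\"ormander integral condition. After restricting to a dyadic shell $S_k=\{|x-y_0|\sim 2^k|y-y_0|\}$ and applying Cauchy--Schwarz, you propose to ``expand the $L^2_x$-norm by orthogonality''; but the functions $x\mapsto a(x,\xi)e^{i2\pi x\xi}$ are not orthogonal over a sub-arc, and if you instead integrate over all of $\mathbb{T}$ the Plancherel-type bound is too lossy: with $h=|y-y_0|$ it gives $\int_{\mathbb{T}}|K(x,y)-K(x,y_0)|^2dx\le C\sum_{\xi}\min(1,|\xi|h)^2\sim C/h$, hence a shell contribution of order $(2^kh)^{1/2}(1/h)^{1/2}=2^{k/2}$, which diverges when summed over $k$. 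The localization that makes H\"ormander's $L^2$-argument work comes from a Littlewood--Paley decomposition of the \emph{symbol} together with a weighted Plancherel estimate $\int |z|^{2s}|k_j(z)|^2dz$ for each frequency-localized piece; that decomposition is absent from your sketch. Moreover, the off-diagonal decay $\int_{\mathbb{T}}a(x,\xi)\overline{a(x,\xi')}e^{i2\pi x(\xi-\xi')}dx=O(|\xi-\xi'|^{-1})$ that you extract from the single $x$-derivative is not enough to close the double sum: the matrix $(|\xi-\xi'|^{-1})_{\xi\neq\xi'}$ is not bounded on $\ell^2$ with absolute values, so square-summability of the coefficients does not control the cross terms. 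You flag ``balancing the shell contributions'' as the delicate point, but that is exactly the step that is missing, and with only one difference in $\xi$ and one derivative in $x$ it is not clear it can be carried out as described; the frozen-symbol reduction above avoids the issue entirely.
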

 
\begin{theorem}\label{l2}\cite[Ruzhansky-Turunen]{Ruz} Let $n\in \mathbb{N},$ $k:=[\frac{n}{2}]+1$ and $a:\mathbb{T}^n\times \mathbb{Z}^n\rightarrow \mathbb{C}$ such that $|\partial^{\beta}_{x}a(x,\xi)|\leq C_{\beta},\,\,\, |\beta|\leq k.$ Then, $a(x,D):L^2(\mathbb{T}^n)\rightarrow L^2(\mathbb{T}^n)$ is a bounded operator.
\end{theorem}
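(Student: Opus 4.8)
The plan is to pass to the Fourier (frequency) side and to exploit two features of the symbol simultaneously: that $a(\cdot,\xi)$ is uniformly bounded in $\xi$ in the Sobolev space $H^k(\mathbb{T}^n)$, and that $2k>n$, which is exactly what the choice $k=[\frac n2]+1$ guarantees. First I would expand the symbol in a Fourier series in the $x$-variable,
\[
a(x,\xi)=\sum_{\eta\in\mathbb{Z}^n}\widehat{a}(\eta,\xi)\,e^{i2\pi\langle x,\eta\rangle},\qquad \widehat{a}(\eta,\xi)=\int_{\mathbb{T}^n}a(x,\xi)e^{-i2\pi\langle x,\eta\rangle}\,dx,
\]
which is legitimate since $a(\cdot,\xi)\in C^k(\mathbb{T}^n)$. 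Substituting into the definition of $a(x,D)f$ and collecting the exponential $e^{i2\pi\langle x,\zeta\rangle}$, I would compute that the Fourier coefficients of $g:=a(x,D)f$ are
\[
\widehat{g}(\zeta)=\sum_{\eta\in\mathbb{Z}^n}\widehat{a}(\eta,\zeta-\eta)\,\widehat{f}(\zeta-\eta).
\]
By Plancherel it then suffices to bound $\sum_\zeta|\widehat g(\zeta)|^2$ by $C\sum_\xi|\widehat f(\xi)|^2$; it is enough to do this for $f$ a trigonometric polynomial and then conclude by density.

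The decisive step is a weighted Cauchy--Schwarz inequality in the summation variable $\eta$, with the weight placed so that the factor carrying $\widehat a$ is the one that is later summed in $\zeta$. Writing $1=\langle\eta\rangle^{-k}\langle\eta\rangle^{k}$, I would estimate
\[
|\widehat g(\zeta)|^2\le\Big(\sum_{\eta}\langle\eta\rangle^{-2k}\Big)\Big(\sum_{\eta}\langle\eta\rangle^{2k}\,|\widehat a(\eta,\zeta-\eta)|^2\,|\widehat f(\zeta-\eta)|^2\Big).
\]
The first factor is a finite constant $C_n$ precisely because $2k=2[\frac n2]+2>n$, so that $\langle\eta\rangle^{-2k}$ is summable over $\mathbb{Z}^n$; this is the one place where the hypothesis $k=[\frac n2]+1$ is used in an essential way. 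Summing the remaining factor over $\zeta$, interchanging the two sums (all terms being nonnegative), and substituting $\xi=\zeta-\eta$ in the inner sum decouples the expression into
\[
\sum_\zeta|\widehat g(\zeta)|^2\le C_n\sum_{\xi}|\widehat f(\xi)|^2\Big(\sum_{\eta}\langle\eta\rangle^{2k}|\widehat a(\eta,\xi)|^2\Big).
\]

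It then remains to bound the inner sum uniformly in $\xi$. Here I would note that $\sum_\eta\langle\eta\rangle^{2k}|\widehat a(\eta,\xi)|^2=\|a(\cdot,\xi)\|_{H^k(\mathbb{T}^n)}^2$, and that this quantity is controlled directly by the hypothesis: expanding $\langle\eta\rangle^{2k}=(1+|\eta|^2)^k$ as a finite sum of monomials $\eta^{2\beta}$ with $|\beta|\le k$ and applying Parseval to each $\partial_x^\beta a(\cdot,\xi)$ gives $\|a(\cdot,\xi)\|_{H^k}^2\le C\sum_{|\beta|\le k}\|\partial_x^\beta a(\cdot,\xi)\|_{L^2}^2\le C\sum_{|\beta|\le k}C_\beta^2$, where the last inequality uses that $\mathbb{T}^n$ has finite measure together with the pointwise bounds $|\partial_x^\beta a(x,\xi)|\le C_\beta$. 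This bound is uniform in $\xi$, so combining the two displays yields $\|g\|_{L^2}\le (C_nC)^{1/2}\|f\|_{L^2}$, as desired. I expect the only genuine subtlety to be the correct placement of the weight in the Cauchy--Schwarz step: putting $\langle\eta\rangle^{\pm k}$ on the variable $\eta$ (rather than on $\zeta-\eta$) is what keeps the $\widehat a$-factor free of the shifting second argument and lets the final substitution collapse to the uniform $H^k$ bound. A naive triangle inequality over $\eta$, or a Schur test, fails here because $\sum_\eta\langle\eta\rangle^{-k}$ diverges in this borderline range, so square-summability of $\langle\eta\rangle^{-k}$ is exactly the resource that must be used.
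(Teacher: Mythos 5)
Your argument is correct and complete: the convolution identity $\widehat{g}(\zeta)=\sum_{\eta}\widehat{a}(\eta,\zeta-\eta)\,\widehat{f}(\zeta-\eta)$, the weighted Cauchy--Schwarz with $\langle\eta\rangle^{\pm k}$, the convergence of $\sum_{\eta}\langle\eta\rangle^{-2k}$ from $2k>n$, and the uniform-in-$\xi$ bound $\sup_{\xi}\|a(\cdot,\xi)\|_{H^k}\le C$ via Parseval all check out, and your closing remark about why a Schur test or an $\ell^1$ bound on $\langle\eta\rangle^{-k}$ must fail in this borderline range is exactly right. Be aware, though, that the paper does not prove this theorem at all: it is quoted from Ruzhansky--Turunen. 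The technique the paper itself uses for the analogous $x$-dependent results (Theorems IA and IB), which is also the standard proof of the quoted theorem, is different from yours: one writes $a(x,D)f(x)=(f\ast r(x))(x)$, where $r(z)$ is the convolution kernel of the frozen-coefficient multiplier $\sigma_z(D)$ with symbol $a(z,\cdot)$, applies the Sobolev embedding $\sup_{z}|F(z)|^p\le C\sum_{|\beta|\le k}\int_{\mathbb{T}^n}|\partial_z^{\beta}F(z)|^p\,dz$ in the auxiliary variable $z$, and then uses Fubini to reduce everything to the operator norms of the multipliers with symbols $\partial_z^{\beta}a(z,\cdot)$, which for $p=2$ are bounded by $\sup_{z,\xi}|\partial_z^{\beta}a(z,\xi)|\le C_{\beta}$ via Plancherel. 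Your frequency-side computation is more self-contained and makes the role of $2k>n$ completely transparent, but it is wedded to $p=2$ through Plancherel; the freeze-and-embed argument costs the same number of $x$-derivatives (both routes ultimately spend the summability of $\langle\eta\rangle^{-2k}$) yet transports verbatim to $L^p$ as soon as one has an $L^p$ bound for the frozen multipliers, which is precisely how the paper obtains Theorems IA and IB.
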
 

We note that compared with several well-known theorems on $L^2-$boundedness of pseudo-differential operators (Calderon-Vaillancourt theorem, for example), the Theorem \ref{l2} does not require any regularity with respect to the $\xi-$variable.
As a consequence of real interpolation and the $L^2-$estimate by Ruzhansky and Turunen, Delgado \cite{Profe} established the following sharp $L^p-$theorem. 

\begin{theorem}\label{p1}\cite[Delgado]{Profe} Let $0<\varepsilon <1$ and $k:=[\frac{n}{2}]+1,$  let $a:\mathbb{T}^n\times \mathbb{Z}^n\rightarrow \mathbb{C}$ be a symbol such that  $|\Delta_{\xi}^{\alpha}a(x,\xi)|\leq C_{\alpha}\langle \xi \rangle^{-\frac{n}{2}\varepsilon-(1-\varepsilon)|\alpha|},$ $|\partial_{x}^{\beta}a(x,\xi)|\leq C_{\beta}\langle \xi \rangle^{-\frac{n}{2}\varepsilon},$ for $|\alpha|,|\beta|\leq k.$ Then  $a(x,D):L^p(\mathbb{T}^n)\rightarrow L^p(\mathbb{T}^n)$ is a bounded linear operator, for $2\leq p<\infty.$
\end{theorem}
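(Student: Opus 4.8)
The plan is to reduce the problem to the Ruzhansky--Turunen $L^2$-estimate of Theorem \ref{l2} by a Littlewood--Paley decomposition in the frequency variable, and then to recover the full range $2\le p<\infty$ by real interpolation. Fix a dyadic partition $\{\psi_j\}_{j\ge 0}$ of $\mathbb{Z}^n$ with $\operatorname{supp}\psi_j\subset\{\langle\xi\rangle\sim 2^j\}$ and $\sum_j\psi_j\equiv1$, set $a_j(x,\xi):=a(x,\xi)\psi_j(\xi)$ and write $a(x,D)=\sum_j a_j(x,D)$. Since $\psi_j$ depends only on $\xi$, the hypothesis $|\partial_x^\beta a(x,\xi)|\le C_\beta\langle\xi\rangle^{-\frac n2\varepsilon}$ gives $|\partial_x^\beta a_j(x,\xi)|\lesssim C_\beta 2^{-\frac n2\varepsilon j}$ for $|\beta|\le k$. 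Because Theorem \ref{l2} requires no regularity in $\xi$ whatsoever, I would apply it to the rescaled symbol $2^{\frac n2\varepsilon j}a_j$, whose $x$-derivatives are bounded uniformly in $j$, and obtain the geometric $L^2$-decay
\[
\|a_j(x,D)\|_{L^2(\mathbb{T}^n)\to L^2(\mathbb{T}^n)}\lesssim 2^{-\frac n2\varepsilon j}.
\]
This is the key quantitative input, available precisely because the $L^2$-theorem dispenses with $\xi$-differences.

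Next I would extract the spatial concentration of each piece from the difference hypothesis. The kernel of $a_j(x,D)$ is $K_j(x,y)=\sum_\xi e^{2\pi i\langle x-y,\xi\rangle}a_j(x,\xi)$, and discrete summation by parts converts $\Delta_\xi^\alpha$ into multiplication by $\prod_l(e^{2\pi i(x_l-y_l)}-1)^{\alpha_l}$; using $|\Delta_\xi^\alpha a_j|\lesssim 2^{-\frac n2\varepsilon j}2^{-(1-\varepsilon)|\alpha|j}$ on the shell together with the lattice count $\#\{\langle\xi\rangle\sim2^j\}\sim2^{jn}$ yields, for $|\alpha|\le k$, a bump estimate of the form
\[
|K_j(x,y)|\lesssim 2^{\frac n2\varepsilon j}\,\frac{2^{(1-\varepsilon)nj}}{\big(1+2^{(1-\varepsilon)j}|x-y|\big)^{N}}.
\]
Thus each $a_j(x,D)$ is, up to the amplitude $2^{\frac n2\varepsilon j}$, an average against a normalized bump concentrated at spatial scale $2^{-(1-\varepsilon)j}$. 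The two endpoint behaviours are then transparent: the $L^2$ norm decays like $2^{-\frac n2\varepsilon j}$, while on the extreme side the normalized bump is harmless and only the amplitude survives, producing growth $2^{\frac n2\varepsilon j}$.

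The assembly proceeds through the Littlewood--Paley square function: for $p\ge2$ one has $\|a(x,D)f\|_p\lesssim\big\|(\sum_j|a_j(x,D)f|^2)^{1/2}\big\|_p$, and since $a_j(x,D)f=a_j(x,D)\widetilde P_jf$ for a fattened projection $\widetilde P_j$, the matter reduces to the vector-valued estimate $\big\|(\sum_j|a_j(x,D)\widetilde P_jf|^2)^{1/2}\big\|_p\lesssim\|f\|_p$. This holds with room to spare at $p=2$ by the geometric decay above, since orthogonality of the shells gives $\sum_j\|a_j(x,D)\widetilde P_jf\|_2^2\lesssim\sum_j2^{-n\varepsilon j}\|\widetilde P_jf\|_2^2\lesssim\|f\|_2^2$. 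The full range $2\le p<\infty$ is then obtained by real interpolation of this vector-valued estimate against the endpoint information encoded in the bump bound, combined with the Fefferman--Stein vector-valued maximal inequality.

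The main obstacle is exactly this last step for $p$ large. Because the amplitudes $2^{\frac n2\varepsilon j}$ grow, the pieces $a_j(x,D)$ are \emph{not} uniformly bounded on $L^p$ when $p>2$, so one cannot merely sum operator norms: interpolating the per-piece bounds $2^{-\frac n2\varepsilon j}$ and $2^{\frac n2\varepsilon j}$ gives $2^{\frac n2\varepsilon j(1-4/p)}$, which is summable only for $p<4$. Nor is the $\ell^2$-combined kernel a standard Calder\'on--Zygmund kernel, its size singularity being of order $|x-y|^{-n(1-\varepsilon/2)/(1-\varepsilon)}$, strictly stronger than $|x-y|^{-n}$ for $\varepsilon\in(0,1)$. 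The delicate point is therefore to play the genuine orthogonality of the frequency shells against the geometric $L^2$-gain $2^{-\frac n2\varepsilon j}$ furnished by Theorem \ref{l2}, interpolating at the sharp order $-\tfrac n2\varepsilon$ so that the amplitude growth is exactly absorbed for every finite $p$; checking that the resulting constants stay finite up to, but not including, $p=\infty$ is where the real-interpolation bookkeeping must be carried out with care.
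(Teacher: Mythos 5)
The paper does not prove this statement itself; it is quoted from Delgado \cite{Profe}, whose argument is indeed, in outline, the one you sketch (dyadic decomposition in $\xi$, the Ruzhansky--Turunen $L^2$ bound applied shell by shell to get the decay $2^{-\frac{n}{2}\varepsilon j}$, then interpolation). But your proposal has a genuine gap exactly where you flag it, and the gap is not a matter of ``bookkeeping'': with the two inputs you actually establish --- $\|a_j(x,D)\|_{L^2\to L^2}\lesssim 2^{-\frac{n}{2}\varepsilon j}$ and $\|a_j(x,D)\|_{L^\infty\to L^\infty}\lesssim 2^{+\frac{n}{2}\varepsilon j}$ --- no interpolation scheme, vector-valued or otherwise, reaches $p\geq 4$, as your own exponent count $2^{\frac{n}{2}\varepsilon j(1-4/p)}$ shows. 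The missing idea is that the second input is simply not the right one: each dyadic piece is in fact \emph{uniformly} bounded on $L^\infty$. One does not estimate $\sup_{x,y}|K_j(x,y)|$ and integrate a bump; instead one bounds $\sup_x\int_{\mathbb{T}^n}|K_j(x,y)|\,dy$ by Cauchy--Schwarz against the weight $(1+2^{j(1-\varepsilon)}|x-y|)^{2k}$, $k=[\tfrac n2]+1$, so that $2k>n$ makes the weight integrable with mass $2^{-jn(1-\varepsilon)/2}$, while Plancherel together with summation by parts converts the weighted $L^2$ norm of $K_j(x,\cdot)$ into $\sum_{|\alpha|\le k}2^{2j(1-\varepsilon)|\alpha|}\|\Delta_\xi^\alpha a_j(x,\cdot)\|_{\ell^2}^2\lesssim 2^{jn(1-\varepsilon)}$ (the factor $2^{-jn\varepsilon/2}$ in the symbol squared cancels against the shell cardinality $2^{jn}$ up to $2^{jn(1-\varepsilon)}$). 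The two factors cancel exactly, giving $\sup_x\int|K_j(x,y)|\,dy\lesssim 1$. Interpolating this uniform $L^\infty$ bound with the $L^2$ decay yields $\|a_j(x,D)\|_{L^p\to L^p}\lesssim 2^{-jn\varepsilon/p}$, which is summable for \emph{every} $p<\infty$; one then sums operator norms directly, with no need for the square function, the Fefferman--Stein inequality, or vector-valued interpolation.

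Two further points. First, the pointwise bump estimate you write down, with a large decay exponent $N$, cannot be extracted from only $k=[\tfrac n2]+1$ differences (summation by parts gives at most $N\le k<n$, so the bump is not normalizable in $L^1$ by that route); this is precisely why the Cauchy--Schwarz/Plancherel argument, which only needs $2k>n$, is the standard device here. Second, the reduction $\|a(x,D)f\|_p\lesssim\bigl\|(\sum_j|a_j(x,D)f|^2)^{1/2}\bigr\|_p$ is not automatic, because $a_j(x,\xi)$ depends on $x$ and hence $a_j(x,D)f$ is not frequency-supported near the $j$-th shell; justifying it would require controlling the $x$-Fourier expansion of the symbol, for which the hypothesis of only $k\approx n/2$ bounded $x$-derivatives is borderline. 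The direct summation of operator norms avoids this issue entirely.
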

In the recent paper \cite{DR4} the authors have considered the $L^{p}$-boundedness of pseudo-differential operators on Compact Lie groups. Although, the original theorem is valid  for general compact Lie groups, we present the following periodic version.
\begin{theorem}\cite[Delgado-Ruzhansky]{DR4}\label{RuzDelg}
Let $0\leq \delta,\rho\leq 1$ and $1<p<\infty.$ Denote by $\kappa$ the smallest even integer larger than $\frac{n}{2}.$ Let $\sigma(x,D)$ be a pseudo-differential operator with symbol $\sigma$ satisfying
\begin{equation}\label{DRR}
|\partial_{x}^{\beta}\Delta_{\xi}^{\alpha}\sigma(x,\xi) |\leq C\langle \xi\rangle^{-m_0-\rho|\alpha|+\delta|\beta|},
\end{equation}
for all $|\alpha|\leq \kappa$ and $|\beta|\leq [\frac{n}{p}]+1,$ where $m_{0}:=\kappa (1-\rho) |1/p-1/2|+\delta([\frac{n}{p}]+1).$ Then $\sigma(x,D)$ extends to a bounded operator on $L^p(\mathbb{T}^n)$ for all $1<p<\infty.$
\end{theorem}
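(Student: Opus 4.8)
The plan is to reduce the operator to a family of Fourier multipliers by freezing the spatial variable, and to recover the diagonal through a Sobolev embedding in $x$. The two summands of $m_{0}$ then arise from two separate mechanisms: the term $\delta([\frac{n}{p}]+1)$ is the cost of the Sobolev embedding, while the term $\kappa(1-\rho)|\frac{1}{p}-\frac{1}{2}|$ is the admissible order in a Fefferman-type theorem for periodic Fourier multipliers. First I would introduce the auxiliary function
\begin{equation}
G(x,y)=\sum_{\xi\in\mathbb{Z}^n}e^{i2\pi\langle y,\xi\rangle}\sigma(x,\xi)\widehat{f}(\xi),
\end{equation}
so that the operator under study is its restriction to the diagonal, $\sigma(x,D)f(x)=G(x,x)$. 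The advantage of this splitting is that, with the first variable frozen at a value $x'$, the map $f\mapsto G(x',\cdot)$ is a genuine Fourier multiplier, since $\sigma(x',\xi)$ then depends only on $\xi$.

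Next I would pass to the diagonal by Sobolev embedding. Writing $N:=[\frac{n}{p}]+1$ and using $W^{N,p}(\mathbb{T}^n)\hookrightarrow L^{\infty}(\mathbb{T}^n)$, which is valid because $N>n/p$, applied in the first variable, one obtains the pointwise bound $|G(x,x)|\leq C\,\|G(\cdot,x)\|_{W^{N,p}_{x'}}$. Raising this to the $p$-th power, integrating in $x$, and invoking Fubini gives
\begin{equation}
\|\sigma(x,D)f\|_{L^p}^p\leq C\sum_{|\beta|\leq N}\int_{\mathbb{T}^n}\Big(\int_{\mathbb{T}^n}|\partial_{x'}^{\beta}G(x',x)|^p\,dx\Big)\,dx'.
\end{equation}
This is precisely the step that consumes the $N=[\frac{n}{p}]+1$ derivatives in $x$.

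For the inner integral I would freeze $x'$ and note that $\partial_{x'}^{\beta}G(x',\cdot)=T_{b}f$ is the Fourier multiplier with symbol $b(\xi)=\partial_{x'}^{\beta}\sigma(x',\xi)$. By hypothesis $|\Delta_{\xi}^{\alpha}b(\xi)|\leq C\langle\xi\rangle^{-m_{0}-\rho|\alpha|+\delta|\beta|}$, so for $|\beta|\leq N$ this multiplier has order at most $-m_{0}+\delta N=-\kappa(1-\rho)|\frac{1}{p}-\frac{1}{2}|$, while $\kappa$ finite differences in $\xi$ remain under control. Invoking a periodic Fefferman multiplier theorem, to the effect that any $b$ with $|\Delta_{\xi}^{\alpha}b(\xi)|\leq C\langle\xi\rangle^{-\kappa(1-\rho)|\frac{1}{p}-\frac{1}{2}|-\rho|\alpha|}$ for $|\alpha|\leq\kappa$ is bounded on $L^p(\mathbb{T}^n)$, yields $\|T_{b}f\|_{L^p}\leq C\|f\|_{L^p}$ with a constant independent of $x'$. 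Substituting this bound into the previous display and integrating the constant over the unit-measure torus closes the estimate.

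The main obstacle is the periodic Fefferman multiplier theorem used in the last step, where one must verify that $\kappa$, the smallest even integer exceeding $n/2$, furnishes exactly the $\xi$-regularity required and that the sharp admissible order is $-\kappa(1-\rho)|\frac{1}{p}-\frac{1}{2}|$. I would prove it by anchoring at $p=2$, where the multiplier has order $0$ and $L^2$-boundedness is immediate from Plancherel (consistently, the full statement at $p=2$ is already covered by Theorem \ref{l2}), and then interpolating with a Hardy-space or $BMO$ endpoint extracted from off-diagonal estimates of the multiplier kernel; the evenness of $\kappa$ is what renders the relevant $L^2$-Sobolev quantity an integer-order one matching the H\"ormander--Mikhlin threshold $n/2$, and duality in $p$ handles the range $1<p<2$. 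A secondary technical point is the uniformity in $x'$ of every constant, which is guaranteed because the symbol inequalities hold with constants independent of $x$.
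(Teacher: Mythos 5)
First, a point of comparison: the paper does not prove Theorem \ref{RuzDelg} at all --- it is quoted from \cite{DR4} (and restated as Theorem \ref{Ruz-Delg}) as an external result for general compact Lie groups, specialized to the torus. So there is no internal proof to measure you against. Judged on its own terms, the outer layer of your argument is sound and is in fact the same mechanism used both in \cite{DR4} and in this paper's proofs of Theorems IA and IB (see the proof of Theorem \ref{IA}): freeze the spatial variable, recover the diagonal via the Sobolev embedding $W^{N,p}\hookrightarrow L^{\infty}$ with $N=[\frac{n}{p}]+1$, apply Fubini, and reduce to a family of Fourier multipliers $b_{x',\beta}(\xi)=\partial_{x'}^{\beta}\sigma(x',\xi)$ with constants uniform in $x'$. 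Your order bookkeeping is also correct: after spending $|\beta|\le N$ derivatives, the multipliers have order $-m_{0}+\delta N=-\kappa(1-\rho)|\frac{1}{p}-\frac{1}{2}|$ with $\kappa$ controlled differences in $\xi$.

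The genuine gap is the multiplier theorem you invoke to close the estimate. It cannot be obtained by transference from the Euclidean Fefferman theorem (Proposition \ref{remi2} combined with Propositions \ref{eqcc} and \ref{stein}), because that route requires decay of order $-m_{p}=-n(1-\rho)|\frac{1}{p}-\frac{1}{2}|$, whereas what survives your reduction is only $-\kappa(1-\rho)|\frac{1}{p}-\frac{1}{2}|$, and $\kappa\approx\frac{n}{2}$ is strictly smaller than $n$ in general (e.g.\ for $n=3$ and for every $n\ge 5$); you therefore have strictly less decay than the classical theorem demands. This improved loss $\kappa(1-\rho)|\frac{1}{p}-\frac{1}{2}|$ is precisely the substance of \cite{DR4}: it rests on a weak-$(1,1)$ (equivalently $H^{1}\to L^{1}$) endpoint for multipliers satisfying $|\Delta_{\xi}^{\alpha}b(\xi)|\le C\langle\xi\rangle^{-\kappa(1-\rho)/2-\rho|\alpha|}$ for $|\alpha|\le\kappa$, established through a Calder\'on--Zygmund/H\"ormander integral condition in which the evenness of $\kappa$ is what allows the kernel estimates to be run through integer powers of $(I-\mathcal{L})^{\kappa/2}$ (compare Theorem \ref{ruz3}, which is the case $\rho=1$), followed by interpolation with the trivial $L^{2}$ bound and duality. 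Your closing paragraph correctly identifies this as the main obstacle and sketches the right strategy, but it does not carry it out, and everything preceding it is routine by comparison. As written, the argument is complete only if you are permitted to cite the $\kappa$-loss multiplier theorem from \cite{DR4} or \cite{Ruz3}; otherwise the endpoint estimate still has to be proved.
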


Now, we present the main results of this paper. Our starting point is the following result which consider the problem about the $L^{p}(\mathbb{T}^n)$-boundedness for  $1\leq p<\infty$, in contrast with the previous results where the case $p=1$ can be not considered because, as it is well known the boundedness of pseudo-differential operators with symbols in $(\rho,\delta)$- classes fails on $L^1.$ We denote $m_{p}:=-n(1-\rho)|\frac{1}{p}-\frac{1}{2}|,$ for all $1\leq p<\infty,$  $n\in\mathbb{N}$ and $0\leq \rho\leq 1.$\\
\\
\noindent \textbf{Theorem IA.}{ \it Let $0<\varepsilon\leq 1,$ $0\leq \rho\leq 1$  and let $\omega:[0,\infty)\rightarrow [0,\infty) $ be a non-decreasing and bounded function such that 
\begin{equation}
\int_{0}^1\omega(t)t^{-1}dt<\infty.
\end{equation}
Let  $\sigma(x,\xi):\mathbb{T}^n\times \mathbb{Z}^n\rightarrow\mathbb{C}$ be a symbol satisfying 
\begin{equation}
|\partial_{x}^{\beta}\Delta^{\alpha}_{\xi}\sigma(x,\xi)|\leq C\omega(\langle \xi \rangle^{-\varepsilon})\langle \xi \rangle^{-\frac{n}{2}(1-\rho)-\rho|\alpha|}
\end{equation}
for  $|\alpha|\leq [\frac{n}{2}]+1,$ $|\beta|\leq [\frac{n}{p}]+1,$  then the corresponding periodic operator $\sigma(x,D)$  extends to a bounded operator on $L^p(\mathbb{T}^n)$ for all $1\leq p<\infty.$}\\
\\
The following theorem is related with the result proved by Delgado and Ruzhansky mentioned above.  We discuss such relation in Remark \ref{mainremark}.\\
\\
\noindent  \textbf{Theorem IB.} { \it Let  $0< \rho\leq 1, $ and $1< p<\infty.$ Let $\sigma:\mathbb{T}^n\times \mathbb{Z}^n\rightarrow \mathbb{C}$ be a symbol satisfying
\begin{equation}\label{interpola}
|\partial_{x}^\beta\Delta^{\alpha}_{\xi}\sigma(x,\xi)|\leq C\langle \xi \rangle^{-m_p-\rho|\alpha|},
\end{equation}
for all $\alpha$ and $\beta$ in $\mathbb{N}^n$ such that  $|\alpha|\leq [\frac{n}{2}]+1,$ $|\beta|\leq [\frac{n}{p}]+1,$ then the corresponding periodic operator $\sigma(x,D)$ extends to  a bounded operator on $L^p(\mathbb{T}^n).$}
\vspace{0.2cm} \\

\begin{remark}We observe that from classical results (e.g.,   Lemma 2.2 of \cite{Na3}) together with suitable versions of the McLean equivalence theorem (see Proposition \ref{eqcc} below) the $L^p$-boundedness of a periodic operator satisfying \eqref{interpola} for $|\alpha|\leq [\frac{n}{2}]+1,$  $|\beta|\leq [\frac{n}{p}]+1,$ $2\leq p<\infty$ can be proved. For the case of $\mathbb{R}^n$ it is well known that symbols with $[\frac{n}{2}]+1$-derivatives in $\xi$ satisfying $(\rho,\delta)$-estimates does not always imply $L^p$-boundedness, $1<p\leq 2.$ However, in order to assure boundedness it is suffice to consider for this case $(n+1)$-derivatives in $\xi$. However, with our approach, we only need $k:=[\frac{n}{2}]+1$-derivatives in $\xi$ for a periodic symbol improving the immediate result that we could have  if we apply suitable versions of the McLean equivalence theorem (see Proposition \ref{eqcc}) (together with Lemma 2.2 of \cite{Na3}).  It is important to mention that the previous result can be not deduced from  Theorem \ref{p1}. In the following result we consider the boundedness of multipliers on $L^{\nu},$ $0<\nu\leq 1.$\end{remark}  

\noindent \textbf{Theorem IC.}  { \it Let $k:=[\frac{n}{2}]+1$. Let $\sigma(\xi)$ be a periodic symbol on $\mathbb{Z}^n$ satisfying
\begin{equation}
|\Delta^{\alpha}_{\xi}\sigma(\xi)|\leq C_{\alpha}\langle \xi \rangle^{-|\alpha|}, \,\,\,|\alpha|\leq k,
\end{equation}
then, the corresponding periodic operator $\sigma(D)$ is bounded from $L^1(\mathbb{T}^n)$ into $L^p(\mathbb{T}^n)$ for all $0<p<1.$}\vspace{0.2cm}

Interpolation via Riesz-Thorin Theorem  allow us to obtain the next $(L^p,L^r)$-theorem:\vspace{0.2cm}\vspace{0.2cm}

\noindent \textbf{Theorem II.}  { \it Let $2\leq p<\infty,$  $0<\varepsilon <1$ and $k:=[\frac{n}{2}]+1.$  Let $a:\mathbb{T}^n\times \mathbb{Z}^n\rightarrow \mathbb{C}$ be a symbol such that  $|\Delta_{\xi}^{\alpha}a(x,\xi)|\leq C_{\alpha}\langle \xi \rangle^{-\frac{n}{2}\varepsilon-(1-\varepsilon)|\alpha|},$ $|\partial_{x}^{\beta}a(x,\xi)|\leq C_{\beta}\langle \xi \rangle^{-\frac{n}{2}\varepsilon},$ for $|\alpha|,|\beta|\leq k$, then  $\sigma(x,D):L^p(\mathbb{T}^n)\rightarrow L^r(\mathbb{T}^n)$ is a bounded linear operator for all $1<q\leq r\leq p<\infty,$ where $\frac{1}{p}+\frac{1}{q}=1.$}\vspace{0.2cm}

An operator $T$ is positive if $f\geq 0$ implies that, the function $Tf$ is non-negative. In the following theorem we study positive and periodic  amplitude operators (see equation \eqref{amplitude} for the definition of amplitude operator):\vspace{0.2cm}

\noindent \textbf{Theorem III.}  { Let $0<\varepsilon,\delta <1.$  If $a(x,y,D)$ is a positive amplitude operator with symbol satisfying the following inequalities
\begin{equation}
|\partial_{x}^{\beta}\partial_{y}^{\alpha}a(x,y,\xi)|\leq C_{\alpha,\beta}\langle \xi\rangle^{\delta|\beta|-\varepsilon|\alpha|},\,\,\,|\alpha|,|\beta|\leq\mu:=[\frac{n}{2}(1-\delta)^{-1}]+1,
\end{equation}
then $a(x,y,D)$ is bounded on $L^1(\mathbb{T}^n).$ } \vspace{0.3cm}\\
\noindent Same as in Theorem \ref{l2}, symbols considered in Theorem III does not require any regularity condition on the Fourier variable. It is important to mention that there exists a connection between the $L^p$ boundedness of periodic operators and its continuity  on Besov spaces.  This relation has been studied by the author on general compact Lie groups in\cite[Section 3]{Duvan4}. Although some results in this paper consider the $L^p$-boundedness of pseudo-differential operators on the torus (for $1\leq p<\infty$), this problem has been addressed on general compact Lie groups in the references \cite{DR4,Fischer2} for all $1<p<\infty.$ Finally, we refer the reader to the references \cite{Ghaemi,Ghaemi2,Ghaemi3,m,Wong2} for other properties of periodic operators on $L^p$-spaces.    

\section{Preliminaries}

We use the standard notation of pseudo-differential operators (see \cite{f, Hor1, Hor2, Ruz, Wong}). The Schwartz space $\mathcal{S}(\mathbb{Z}^n)$ denote the space of functions $\phi:\mathbb{Z}^n\rightarrow \mathbb{C}$ such that 
 \begin{equation}
 \forall M\in\mathbb{R}, \exists C_{M}>0,\, |\phi(\xi)|\leq C_{M}\langle \xi \rangle^M,
 \end{equation}
where $\langle \xi \rangle=(1+|\xi|^2)^{\frac{1}{2}}.$ The toroidal Fourier transform is defined for any $f\in C^{\infty}(\mathbb{T}^n)$ by $\widehat{f}(\xi)=\int_{}e^{-i2\pi\langle x,\xi\rangle}f(x)dx,\,\,\xi\in\mathbb{Z}^n.$ The inversion formula is given by $f(x)=\sum_{}e^{i2\pi\langle x,\xi \rangle }\widehat{u}(\xi),\,\,x\in\mathbb{T}^n.$ The periodic H\"ormander class $S^m_{\rho,\delta}(\mathbb{T}^n\times \mathbb{R}^n), \,\, 0\leq \rho,\delta\leq 1,$ consists of those functions $a(x,\xi)$ which are smooth in $(x,\xi)\in \mathbb{T}^n\times \mathbb{R}^n$ and which satisfy toroidal symbols inequalities
\begin{equation}\label{css}
|\partial^{\beta}_{x}\partial^{\alpha}_{\xi}a(x,\xi)|\leq C_{\alpha,\beta}\langle \xi \rangle^{m-\rho|\alpha|+\delta|\beta|}.
\end{equation}
Symbols in $S^m_{\rho,\delta}(\mathbb{T}^n\times \mathbb{R}^n)$ are symbols in $S^m_{\rho,\delta}(\mathbb{R}^n\times \mathbb{R}^n)$ (see \cite{Hor1, Ruz}) of order $m$ which are 1-periodic in $x.$
If $a(x,\xi)\in S^{m}_{\rho,\delta}(\mathbb{T}^n\times \mathbb{R}^n),$ the corresponding pseudo-differential operator is defined by
\begin{equation}\label{hh}
a(x,D)u(x)=\int_{\mathbb{T}^n}\int_{\mathbb{R}^n}e^{i2\pi\langle x-y,\xi \rangle}a(x,\xi)u(y)d\xi dy,\,\, u\in C^{\infty}(\mathbb{T}^n).
\end{equation}
The set $S^m_{\rho,\delta}(\mathbb{T}^n\times \mathbb{Z}^n),\, 0\leq \rho,\delta\leq 1,$ consists of  those functions $a(x, \xi)$ which are smooth in $x$  for all $\xi\in\mathbb{Z}^n$ and which satisfy

\begin{equation}\label{cs}
\forall \alpha,\beta\in\mathbb{N}^n,\exists C_{\alpha,\beta}>0,\,\, |\Delta^{\alpha}_{\xi}\partial^{\beta}_{x}a(x,\xi)|\leq C_{\alpha,\beta}\langle \xi \rangle^{m-\rho|\alpha|+\delta|\beta|}.
 \end{equation}

The operator $\Delta_\xi^\alpha$ in  \eqref{cs} is the difference operator which is defined as follows. First, if $f:\mathbb{Z}^n\rightarrow \mathbb{C}$ is a discrete function and $(e_j)_{1\leq j\leq n}$ is the canonical basis of $\mathbb{R}^n,$
\begin{equation}
(\Delta_{\xi_{j}} f)(\xi)=f(\xi+e_{j})-f(\xi).
\end{equation}
If $k\in\mathbb{N},$ denote by $\Delta^k_{\xi_{j}}$  the composition of $\Delta_{\xi_{j}}$ with itself $k-$times. Finally, if $\alpha\in\mathbb{N}^n,$ $\Delta^{\alpha}_{\xi}= \Delta^{\alpha_1}_{\xi_{1}}\cdots \Delta^{\alpha_n}_{\xi_{n}}.$
The toroidal operator (or periodic operator) with symbol $a(x,\xi)$ is defined as
\begin{equation}\label{aa}
a(x,D)u(x)=\sum_{\xi\in\mathbb{Z}^n}e^{i 2\pi\langle x,\xi\rangle}a(x,\xi)\widehat{u}(\xi),\,\, u\in C^{\infty}(\mathbb{T}^n).
\end{equation}

There exists a process to interpolate the second argument of symbols on $\mathbb{T}^n\times \mathbb{Z}^n$ in a smooth way to get a symbol defined on $\mathbb{T}^n\times \mathbb{R}^n.$
\begin{proposition}\label{eq}
Let $0\leq \delta \leq 1,$ $0< \rho\leq 1.$ The symbol $a\in S^m_{\rho,\delta}(\mathbb{T}^n\times \mathbb{Z}^n)$ if only if there exists  a Euclidean symbol $a'\in S^m_{\rho,\delta}(\mathbb{T}^n\times \mathbb{R}^n)$ such that $a=a'|_{\mathbb{T}^n\times \mathbb{Z}^n}.$
\end{proposition}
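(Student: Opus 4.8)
The plan is to prove the two implications separately; the restriction direction is routine, while the construction of a Euclidean extension carries all the content. Throughout, $\beta$ denotes the $x$-multi-index and $\alpha,\gamma$ denote $\xi$-multi-indices, and I use freely the comparison $\langle\xi+\theta\rangle\sim\langle\xi\rangle$, valid uniformly for $\theta$ in any fixed bounded subset of $\mathbb{R}^n$, together with the two-sided Peetre inequality $\langle\eta\rangle\lesssim\langle\xi\rangle\langle\xi-\eta\rangle$. For the easy direction, suppose $a'\in S^{m}_{\rho,\delta}(\mathbb{T}^n\times\mathbb{R}^n)$ and set $a:=a'|_{\mathbb{T}^n\times\mathbb{Z}^n}$. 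Since $\partial_x^\beta$ commutes with restriction in the frequency variable, it suffices to dominate $\Delta_\xi^\alpha$ by $\partial_\xi^\alpha$. For this I would use the exact identity $\Delta_{\xi_j}f(\xi)=\int_0^1(\partial_{\xi_j}f)(\xi+t e_j)\,dt$ and iterate it, so that $\Delta_\xi^\alpha\partial_x^\beta a'(x,\xi)$ is realized as an average of $\partial_\xi^\alpha\partial_x^\beta a'(x,\xi+\theta)$ over $\theta$ in a bounded box depending only on $\alpha$. Inserting the Euclidean estimate \eqref{css} and using $\langle\xi+\theta\rangle\sim\langle\xi\rangle$ on that box yields $|\Delta_\xi^\alpha\partial_x^\beta a(x,\xi)|\le C_{\alpha,\beta}\langle\xi\rangle^{m-\rho|\alpha|+\delta|\beta|}$, that is $a\in S^m_{\rho,\delta}(\mathbb{T}^n\times\mathbb{Z}^n)$.

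For the converse I would produce the extension by smooth interpolation in $\xi$. Fix once and for all a Schwartz function $\theta\in\mathcal{S}(\mathbb{R}^n)$ which is \emph{cardinal}, $\theta(k)=\delta_{0,k}$ for $k\in\mathbb{Z}^n$, and which \emph{reproduces polynomials} up to a large fixed degree $N$, in the sense that $\sum_{\eta\in\mathbb{Z}^n}P(\eta)\,\theta(\xi-\eta)=P(\xi)$ for every polynomial of degree $\le N$; such a $\theta$ is obtained by choosing $\widehat{\theta}\in C_c^\infty(\mathbb{R}^n)$ with $\sum_{k\in\mathbb{Z}^n}\widehat{\theta}(\,\cdot\,+2\pi k)\equiv1$ (the interpolation condition) and with $\widehat{\theta}-1$ vanishing to order $N$ at the origin (the Strang--Fix condition). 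I then set
\[ a'(x,\xi):=\sum_{\eta\in\mathbb{Z}^n}a(x,\eta)\,\theta(\xi-\eta). \]
The rapid decay of $\theta$ against the polynomial growth of $a(x,\cdot)$ makes this series, and each of its $(x,\xi)$-derivatives, converge absolutely and locally uniformly, so $a'\in C^\infty(\mathbb{T}^n\times\mathbb{R}^n)$; and the cardinal property gives $a'(x,k)=a(x,k)$ for $k\in\mathbb{Z}^n$, i.e. $a'|_{\mathbb{T}^n\times\mathbb{Z}^n}=a$, as required.

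It remains to verify the Euclidean symbol inequalities for $a'$, and this is the heart of the matter and the main obstacle. Estimating $\partial_\xi^\alpha\partial_x^\beta a'=\sum_\eta\partial_x^\beta a(x,\eta)\,\partial_\xi^\alpha\theta(\xi-\eta)$ termwise only yields $\langle\xi\rangle^{m+\delta|\beta|}$ and misses the required gain $\langle\xi\rangle^{-\rho|\alpha|}$, since a fixed kernel derivative is merely bounded; the gain must instead be extracted from the difference estimates on $a$. The device I would use is discrete summation by parts: one shows that
\[ \partial_\xi^\alpha\partial_x^\beta a'(x,\xi)=\sum_{\eta\in\mathbb{Z}^n}\Delta_\xi^{\alpha}\partial_x^\beta a(x,\eta)\,\Theta_\alpha(\xi-\eta), \]
where $\Theta_\alpha$ is a fixed kernel obtained from $\partial_\xi^\alpha\theta$ by $|\alpha|$ discrete summations, the operation dual to the forward differences $\Delta_{\xi_j}$. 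The crucial point is that $\Theta_\alpha$ is again Schwartz: its decay is exactly what the cardinal and Strang--Fix conditions on $\theta$ guarantee, since these force the sampled Fourier data of $\partial_\xi^\alpha\theta$ to vanish to the order needed for the discrete antiderivatives to remain rapidly decreasing rather than merely bounded. Granting this, the toroidal hypothesis gives $|\Delta_\xi^\alpha\partial_x^\beta a(x,\eta)|\le C\langle\eta\rangle^{m-\rho|\alpha|+\delta|\beta|}$, and summing against the rapidly decreasing $\Theta_\alpha$ (which localizes the sum to $\langle\eta\rangle\sim\langle\xi\rangle$) produces $|\partial_\xi^\alpha\partial_x^\beta a'(x,\xi)|\le C_{\alpha,\beta}\langle\xi\rangle^{m-\rho|\alpha|+\delta|\beta|}$, the positivity $\rho>0$ ensuring that each transferred difference is a genuine gain in the weight. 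The principal difficulty, then, is establishing the Schwartz decay of the antiderivative kernels $\Theta_\alpha$, namely carrying out the summation by parts while keeping the kernels localized; once this bookkeeping is in place the symbol estimates follow routinely.
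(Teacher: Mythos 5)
Your argument is the standard Ruzhansky--Turunen extension proof, which is exactly what the paper invokes by citing \cite{Mc, Ruz} (and reuses explicitly in the proof of Lemma \ref{lemmaIA}): the restriction direction via the integral representation of forward differences, and the extension via $a'(x,\xi)=\sum_{\eta\in\mathbb{Z}^n}(\mathcal{F}\theta)(\xi-\eta)\,a(x,\eta)$ followed by summation by parts that trades $\partial_\xi^\alpha$ on the kernel for $\Delta_\xi^\alpha$ on the symbol against Schwartz kernels $\Theta_\alpha$ (the $\phi_\alpha$ of Lemma 4.5.1 of \cite{Ruz}). The one step you leave open --- the Schwartz decay of the discrete antiderivatives $\Theta_\alpha$ --- is precisely that lemma, and it follows from taking $\widehat{\theta}\in C_c^\infty$ with the periodization identity and support small enough that division by $e^{2\pi i x_j}-1$ on the Fourier side preserves smoothness and compact support; the Strang--Fix polynomial-reproduction condition you impose is not what drives this and is not needed.
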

\begin{proof} The proof can be found in \cite{Mc, Ruz}.
\end{proof}

It is a non trivial fact, however, that the definition of pseudo-differential operator on a torus given by Agranovich (equation \ref{aa} )  and H\"ormander (equation \ref{hh}) are equivalent. McLean (see \cite{Mc}) prove this for all the H\"ormander classes $S^m_{\rho,\delta}(\mathbb{T}^n\times \mathbb{Z}^n).$ A different proof to this fact can be found in \cite{Ruz}, Corollary 4.6.13.

\begin{proposition}$\label{eqc}($Equality of Operators Classes$).$ For $0\leq \delta \leq 1,$ $0<\rho\leq 1$ we have $\Psi^{m}_{\rho,\delta}(\mathbb{T}^n\times \mathbb{Z}^n)=\Psi^{m}_{\rho,\delta}(\mathbb{T}^n\times \mathbb{R}^n).$
\end{proposition}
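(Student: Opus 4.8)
The plan is to reduce the asserted equality of operator classes to the equality of symbol classes already recorded in Proposition \ref{eq}, together with an explicit comparison of the two quantization rules \eqref{aa} and \eqref{hh} on periodic test functions. By definition $\Psi^{m}_{\rho,\delta}(\mathbb{T}^n\times\mathbb{Z}^n)$ is the image of $S^{m}_{\rho,\delta}(\mathbb{T}^n\times\mathbb{Z}^n)$ under the toroidal quantization $a\mapsto\mathrm{Op}_{\mathbb{T}}(a)$ of \eqref{aa}, while $\Psi^{m}_{\rho,\delta}(\mathbb{T}^n\times\mathbb{R}^n)$ is the image of $S^{m}_{\rho,\delta}(\mathbb{T}^n\times\mathbb{R}^n)$ under the quantization $a'\mapsto\mathrm{Op}_{\mathbb{R}}(a')$ of \eqref{hh}. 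Since Proposition \ref{eq} already identifies the two symbol spaces through restriction and extension, it remains only to show that the two quantizations generate the same family of operators on $C^{\infty}(\mathbb{T}^n)$.

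For the inclusion $\Psi^{m}_{\rho,\delta}(\mathbb{T}^n\times\mathbb{R}^n)\subseteq\Psi^{m}_{\rho,\delta}(\mathbb{T}^n\times\mathbb{Z}^n)$ I would fix $a'\in S^{m}_{\rho,\delta}(\mathbb{T}^n\times\mathbb{R}^n)$, insert the Fourier series $u=\sum_{\eta}\widehat{u}(\eta)e^{i2\pi\langle\cdot,\eta\rangle}$ of a test function into \eqref{hh}, and interchange the summation with the $\xi$-integral, which is legitimate since $\widehat{u}$ decays rapidly while $a'$ grows only polynomially in $\xi$. The integration in $y$ over $\mathbb{T}^n$ produces the periodized kernel $I(\eta-\xi)=\prod_{j}\frac{e^{-i2\pi\xi_{j}}-1}{i2\pi(\eta_{j}-\xi_{j})}$, so that \eqref{hh} collapses into the discrete form \eqref{aa} with the toroidal symbol $\tilde{a}(x,\eta)=\int_{\mathbb{R}^n}e^{i2\pi\langle x,\xi-\eta\rangle}a'(x,\xi)I(\eta-\xi)\,d\xi$; thus $\mathrm{Op}_{\mathbb{R}}(a')=\mathrm{Op}_{\mathbb{T}}(\tilde{a})$. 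To verify the toroidal estimates \eqref{cs} for the correspondence $Q:a'\mapsto\tilde{a}$, I would substitute $\xi\mapsto\xi+\eta$, so that $\Delta_{\eta}^{\alpha}$ acts only on the second slot of $a'$ and is converted by the mean value theorem into $\partial_{\xi}^{\alpha}a'$, supplying the gain $\langle\eta\rangle^{-\rho|\alpha|}$ from the Hörmander inequality while $\partial_{x}^{\beta}$ yields $\langle\eta\rangle^{\delta|\beta|}$; estimating the remaining oscillatory integral against $I$ and summing its tails uniformly in $\eta$ then shows $\tilde{a}\in S^{m}_{\rho,\delta}(\mathbb{T}^n\times\mathbb{Z}^n)$, the hypothesis $\rho>0$ providing exactly the decay the sums require.

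For the reverse inclusion I would prove that $Q$ is surjective. The asymptotic analysis of the same integral gives $\tilde{a}=a'|_{\mathbb{T}^n\times\mathbb{Z}^n}$ modulo a symbol of strictly lower order, so starting from a toroidal symbol $a$ one extends it to a Euclidean symbol through Proposition \ref{eq} and removes the lower-order discrepancy by the standard asymptotic (Neumann) summation, producing a Euclidean preimage $a'$ with $Q(a')=a$ and hence $\mathrm{Op}_{\mathbb{T}}(a)=\mathrm{Op}_{\mathbb{R}}(a')$. Therefore $Q$ is a bijection between the two symbol classes that intertwines the quantizations, and the equality $\Psi^{m}_{\rho,\delta}(\mathbb{T}^n\times\mathbb{Z}^n)=\Psi^{m}_{\rho,\delta}(\mathbb{T}^n\times\mathbb{R}^n)$ follows.

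The step I expect to be the main obstacle is the symbol estimate for $\tilde{a}=Q(a')$: translating the continuous Hörmander inequalities satisfied by $a'$ into the toroidal inequalities \eqref{cs}, which are phrased through the difference operators $\Delta_{\eta}^{\alpha}$, and controlling the periodization series uniformly in $\eta\in\mathbb{Z}^n$. The interplay between discrete differences and continuous derivatives, together with the role of $\rho>0$ in securing convergence of the correction series, is the technical heart of the matter; once it is settled the remaining bookkeeping of Fourier expansions and the justification of the interchange of sum and integral are routine. This is precisely the equivalence of quantizations established by McLean \cite{Mc} and reproved in \cite{Ruz}, Corollary 4.6.13.
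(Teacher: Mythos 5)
The paper does not actually prove Proposition \ref{eqc}: it only points to McLean \cite{Mc} and to Corollary 4.6.13 of \cite{Ruz}, so there is no internal argument to compare yours against. Your sketch reconstructs the standard proof from those references, and the reduction you carry out in detail is correct: inserting the Fourier series of $u$ into \eqref{hh} and integrating in $y$ over $\mathbb{T}^n$ does collapse the Euclidean quantization into the toroidal form \eqref{aa} with the symbol $\tilde{a}(x,\eta)=\int_{\mathbb{R}^n}e^{i2\pi\langle x,\xi-\eta\rangle}a'(x,\xi)I(\eta-\xi)\,d\xi$. However, the estimate you propose for $\tilde{a}$ cannot close as described. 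The kernel $I$ decays only like $\prod_j|\eta_j-\xi_j|^{-1}$, so it is not absolutely integrable on $\mathbb{R}^n$; after the mean value theorem one cannot ``estimate the remaining oscillatory integral against $I$'' by taking absolute values, and the situation is worse still because $\partial_x^{\beta}$ also falls on the phase $e^{i2\pi\langle x,\xi-\eta\rangle}$ and produces polynomially growing factors of $\xi-\eta$ under the integral. The entire content of McLean's theorem is the summation/integration by parts against the oscillation that converts this divergent-looking expression into a convergent one with the right symbol bounds. You correctly identify this as the technical heart and defer it to \cite{Mc,Ruz}, which is legitimate for a sketch, but it means the only part of the proof that is nontrivial is the part not done.

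The genuinely problematic step is your reverse inclusion. You propose to show surjectivity of $Q$ by noting that $\tilde{a}-a'|_{\mathbb{T}^n\times\mathbb{Z}^n}$ is of ``strictly lower order'' and removing the discrepancy by asymptotic (Neumann) summation. The correction terms in that expansion are of the type $\partial_x^{\alpha}\partial_\xi^{\alpha}a'$, hence of order $m-(\rho-\delta)|\alpha|$: they are of strictly lower order only when $\rho>\delta$, whereas the proposition is asserted for all $0\le\delta\le1$ and $0<\rho\le1$. In the range $\delta\ge\rho$ the expansion does not improve in order, the Neumann summation does not converge asymptotically, and this step fails. The cited proofs avoid the issue entirely: the extension of Proposition \ref{eq} is built from the cutoff $\theta$ of Lemma 4.5.1 of \cite{Ruz} precisely so that the toroidal quantization of $a$ and the Euclidean quantization of its extension agree exactly on trigonometric polynomials (Theorem 4.6.12 of \cite{Ruz}), so the inclusion $\Psi^{m}_{\rho,\delta}(\mathbb{T}^n\times\mathbb{Z}^n)\subseteq\Psi^{m}_{\rho,\delta}(\mathbb{T}^n\times\mathbb{R}^n)$ requires no expansion and no remainder removal, for every admissible $\delta$. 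Even in the range where your summation does work, it produces $Q(a')=a$ only modulo $S^{-\infty}$, and you would still need to add the observation that a toroidal smoothing operator has a smooth Schwartz kernel and therefore belongs to $\Psi^{m}_{\rho,\delta}(\mathbb{T}^n\times\mathbb{R}^n)$ as well. I would replace your surjectivity argument by the exact-intertwining argument of the references.
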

A look at the proof (based in Theorem 4.5.3 of \cite{Ruz}) of the Proposition \ref{eqc} shows us that a more general version is still valid for symbols with limited regularity as follows (see Corollary 4.5.7 of \cite{Ruz}):

\begin{corollary}\label{eqc'} Let $0\leq \delta \leq 1,$ $0\leq \rho<1.$ Let $a:\mathbb{T}^n\times \mathbb{R}^n\rightarrow \mathbb{C}$ satisfying $(\rho,\delta)-$inequalities for $|\alpha|\leq N_{1}$ and $|\beta|\leq N_{2}.$ Then the restriction $\tilde{a}=a|_{\mathbb{T}^n\times \mathbb{Z}^n}$ satisfies $(\rho,\delta)-$estimates for $|\alpha|\leq N_{1}$ and $|\beta|\leq N_{2}.$  The converse holds true, i.e, if every symbol on $\mathbb{T}^n\times \mathbb{Z}^n$ satisfying $(\rho,\delta)$-inequalities (as in \eqref{cs}) is the restriction of a symbol on $\mathbb{T}^n\times \mathbb{R}^n$ satisfying $(\rho,\delta)$ inequalities as in \eqref{css}. 
\end{corollary}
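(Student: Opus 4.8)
The plan is to prove the two implications separately: the restriction direction is elementary, while the extension (converse) direction carries the real content and is where I expect the main difficulty. Throughout I would rely only on the interpolation scheme already recorded in Proposition \ref{eq} (Theorem 4.5.3 of \cite{Ruz}), adding the bookkeeping needed to keep track of how many derivatives are consumed.

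\textbf{Restriction.} I would begin from the elementary identity relating the forward difference to the derivative. For a single step, $\Delta_{\xi_j}f(\xi)=f(\xi+e_j)-f(\xi)=\int_0^1(\partial_{\xi_j}f)(\xi+te_j)\,dt$, and iterating gives, for any multi-index $\alpha$,
\[
\Delta_\xi^\alpha f(\xi)=\int_{[0,1]^{|\alpha|}}(\partial_\xi^\alpha f)\big(\xi+\theta(t)\big)\,dt,
\]
where $\theta(t)\in\mathbb{R}^n$ satisfies $|\theta(t)|\le|\alpha|$. Applying this with $f=\partial_x^\beta a(x,\cdot)$ (legitimate since $\partial_x^\beta$ commutes with $\Delta_\xi^\alpha$ and with $\partial_\xi^\alpha$), the discrete estimate for $\tilde a=a|_{\mathbb{T}^n\times\mathbb{Z}^n}$ reduces to bounding a supremum of the Euclidean estimate over the translated cube $\xi+[0,|\alpha|]^n$. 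The only remaining point is that $\langle\xi+\theta\rangle$ and $\langle\xi\rangle$ are comparable uniformly for $|\theta|\le|\alpha|$, which is Peetre's inequality $\langle\xi+\theta\rangle^{s}\le 2^{|s|/2}\langle\theta\rangle^{|s|}\langle\xi\rangle^{s}$; since $|\theta|\le N_1$ is bounded, $\langle\theta\rangle^{|s|}$ is absorbed into the constant. This produces the discrete $(\rho,\delta)$-estimates with the \emph{same} orders $|\alpha|\le N_1$, $|\beta|\le N_2$, costing no additional derivatives.

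\textbf{Extension.} For the converse I would fix $\phi\in\mathcal{S}(\mathbb{R}^n)$ with $\widehat{\phi}$ smooth and compactly supported and with the interpolation property $\phi(0)=1$, $\phi(\eta)=0$ for $\eta\in\mathbb{Z}^n\setminus\{0\}$, and set $a(x,\xi)=\sum_{\eta\in\mathbb{Z}^n}\tilde a(x,\eta)\,\phi(\xi-\eta)$, so that $a|_{\mathbb{T}^n\times\mathbb{Z}^n}=\tilde a$ and the series converges absolutely by the Schwartz decay of $\phi$ against the polynomial growth of $\tilde a$. Differentiation in $x$ is harmless: for $|\beta|\le N_2$, $\partial_x^\beta$ falls directly on $\tilde a(x,\eta)$. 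The content is in the $\xi$-derivatives, where a naive bound on $\partial_\xi^\alpha\phi(\xi-\eta)$ only yields order $m$ and loses the gain $-\rho|\alpha|$; to recover it I would transfer the $\xi$-derivatives onto the discrete variable by summation by parts. On the Fourier side this rests on writing $(2\pi i t)^\alpha\widehat{\phi}(t)=\prod_j(e^{2\pi i t_j}-1)^{\alpha_j}\,\widehat{\Phi}(t)$ for a Schwartz function $\Phi$: the quotient $\prod_j(2\pi i t_j)^{\alpha_j}/(e^{2\pi i t_j}-1)^{\alpha_j}$ is regular at $t=0$, and $\widehat{\phi}$ vanishes to infinite order at the remaining zeros of the denominator inside $\mathrm{supp}\,\widehat{\phi}$, so $\widehat{\Phi}\in C_c^\infty$. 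Discrete Abel summation then gives $\partial_x^\beta\partial_\xi^\alpha a(x,\xi)=(-1)^{|\alpha|}\sum_\eta(\partial_x^\beta\Delta_\eta^\alpha\tilde a)(x,\eta)\,\Phi(\xi-\eta)$. Inserting the hypothesis $|\partial_x^\beta\Delta_\eta^\alpha\tilde a(x,\eta)|\le C\langle\eta\rangle^{m-\rho|\alpha|+\delta|\beta|}$, using Peetre's inequality to pass from $\langle\eta\rangle$ to $\langle\xi\rangle$, and summing the rapidly decaying weight $\langle\xi-\eta\rangle^{|m-\rho|\alpha|+\delta|\beta||}|\Phi(\xi-\eta)|$ over $\eta$ yields the Euclidean estimate with the same orders $|\alpha|\le N_1$, $|\beta|\le N_2$.

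\textbf{Main obstacle.} The delicate point is entirely in the converse bookkeeping: one must verify that the summation-by-parts step consumes \emph{exactly} $|\alpha|$ differences and no more, so that the regularity budget $N_1$ is preserved rather than degraded, and that the Schwartz decay of $\phi$ and of $\Phi$ together with the controlled polynomial growth of the rearranged summands secures absolute convergence of every series produced along the way. Both facts are already implicit in the smooth case (Proposition \ref{eq}); the only genuinely new task is to confirm that the argument never differentiates in $x$ beyond order $N_2$ nor takes more than $N_1$ differences in $\xi$, which is precisely what allows the statement to persist for symbols of limited regularity with the very same orders $N_1,N_2$.
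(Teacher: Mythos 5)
Your proposal is correct and follows essentially the same route as the paper: the restriction direction via the mean-value/integral representation of $\Delta_\xi^\alpha$ plus Peetre's inequality, and the extension direction via the interpolation $a(x,\xi)=\sum_\eta\tilde a(x,\eta)\phi(\xi-\eta)$ with the $\xi$-derivatives transferred to discrete differences by summation by parts — this is exactly the mechanism of Lemma 4.5.1 and Theorem 4.5.3 of \cite{Ruz} that the paper invokes (and reproduces in its Lemma \ref{lemmaIA} for the weighted case). Your derivative bookkeeping matches the paper's.
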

Let us denote $\Psi^{m}_{\rho,\delta,N_1,N_2}(\mathbb{T}^n\times \mathbb{Z}^n)$ to the set of operators associated to symbols satisfying \eqref{cs} for all $|\alpha|\leq N_{1}$ and $|\beta|\leq N_2,$ and $\Psi^{m}_{\rho,\delta,N_1,N_2}(\mathbb{T}^n\times \mathbb{R}^n)$ defined similarly. Then we have (see Theorem 2.14 of \cite{Profe}):
\begin{proposition}
$\label{eqcc}($Equality of Operators Classes$).$ For $0\leq \delta \leq 1,$ $0<\rho\leq 1$ we have $\Psi^{m}_{\rho,\delta,N_1,N_2}(\mathbb{T}^n\times \mathbb{Z}^n)=\Psi^{m}_{\rho,\delta,N_1,N_2}(\mathbb{T}^n\times \mathbb{R}^n).$
\end{proposition}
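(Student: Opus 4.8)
The plan is to establish the two inclusions separately by combining the symbol-level correspondence of Corollary \ref{eqc'} with the comparison of quantizations underlying Proposition \ref{eqc}, while keeping careful track of how many derivatives are consumed at each stage.

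First I would treat the inclusion $\Psi^{m}_{\rho,\delta,N_1,N_2}(\mathbb{T}^n\times\mathbb{R}^n)\subseteq \Psi^{m}_{\rho,\delta,N_1,N_2}(\mathbb{T}^n\times\mathbb{Z}^n)$. Take $T=a'(x,D)$ defined by the Hörmander quantization \eqref{hh}, where $a'$ satisfies the $(\rho,\delta)$-inequalities \eqref{css} only for $|\alpha|\leq N_1$ and $|\beta|\leq N_2$. Applying $T$ to $u\in C^{\infty}(\mathbb{T}^n)$, I would insert the Fourier series of $u$ and interchange the absolutely convergent sum with the integrals in \eqref{hh}; this is exactly the computation in the proof of Theorem 4.5.3 of \cite{Ruz} on which Proposition \ref{eqc} rests. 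The outcome is a toroidal operator $\tilde{a}(x,D)$ in the sense of \eqref{aa}, whose symbol $\tilde{a}$ is obtained from $a'$ through the periodization formula of that theorem. By Corollary \ref{eqc'} (restriction direction), $\tilde{a}$ obeys the toroidal estimates \eqref{cs} for the same ranges $|\alpha|\leq N_1$, $|\beta|\leq N_2$, so that $T\in\Psi^{m}_{\rho,\delta,N_1,N_2}(\mathbb{T}^n\times\mathbb{Z}^n)$.

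For the reverse inclusion I would start with $T=a(x,D)$ given by the toroidal quantization \eqref{aa}, with $a$ satisfying \eqref{cs} up to order $N_1$ in $\xi$ and $N_2$ in $x$. Corollary \ref{eqc'} (extension direction) produces a Euclidean symbol $a'$ on $\mathbb{T}^n\times\mathbb{R}^n$ satisfying \eqref{css} for the same ranges and with $a'|_{\mathbb{T}^n\times\mathbb{Z}^n}=a$. Running the same quantization comparison in the opposite direction identifies $a(x,D)$ with the Hörmander operator of a Euclidean symbol in the class with the prescribed regularity, giving $T\in\Psi^{m}_{\rho,\delta,N_1,N_2}(\mathbb{T}^n\times\mathbb{R}^n)$. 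Since both inclusions hold, the two operator classes coincide.

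The main obstacle is not the symbol bookkeeping, which is already packaged in Corollary \ref{eqc'}, but the need to verify that the operator identity furnished by Theorem 4.5.3 of \cite{Ruz} survives with only finite regularity. In the smooth case that identity is produced by a full asymptotic expansion of $\tilde{a}-a'|_{\mathbb{Z}^n}$ in difference operators, whose successive terms consume increasingly many $\xi$-derivatives; with only $N_1$ difference operators available the expansion must be truncated, leaving a remainder that has to be shown to define an operator still lying in the class with the same $(N_1,N_2)$ regularity. Thus the delicate step is to inspect the integration-by-parts and Taylor-remainder estimates inside that proof, confirm that each remainder term is controlled using at most $N_1$ derivatives in $\xi$ and $N_2$ in $x$, and thereby guarantee that no smoothness beyond the assumed orders is implicitly used. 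This is precisely the refinement recorded as Corollary 4.5.7 of \cite{Ruz} and as Theorem 2.14 of \cite{Profe}, which I would cite to close the argument.
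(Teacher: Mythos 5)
Your argument is essentially the paper's: the paper offers no independent proof of Proposition \ref{eqcc}, simply citing Theorem 2.14 of \cite{Profe} (with the symbol-level correspondence recorded as Corollary \ref{eqc'}, via Corollary 4.5.7 of \cite{Ruz}), which is exactly where your proposal also lands. One small caveat: the identity between the quantizations \eqref{hh} and \eqref{aa} for the interpolated symbol of Theorem 4.5.3 of \cite{Ruz} is exact rather than obtained from a truncated asymptotic expansion, so the remainder analysis you flag as the delicate step is not really where the finite-regularity bookkeeping lives; this does not affect your conclusion, since you close by deferring to the correct references.
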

The toroidal calculus is closed under adjoint operators. The corresponding announcement is the following. 
\begin{proposition}\label{ad} Let $0\leq \delta <\rho\leq 1.$ Let $a(x,D)$ be a operator with symbol $a(X,\xi)\in S^m_{\rho,\delta}(\mathbb{T}^n\times \mathbb{Z}^n).$ Then, the adjoint $a^{*}(x,D)$ of $a(x,D),$ has symbol $a^{*}(x,\xi)\in S^m_{\rho,\delta}(\mathbb{T}^n\times \mathbb{Z}^n).$ The symbol $a^{*}(x,\xi)$ has the following asymptotic expansion: 
\begin{equation} \sigma^{*}(x,\xi)\approx \sum_{\alpha \geq 0}\frac{1}{\alpha !}\Delta^{\alpha}_{\xi}D^{(\alpha)}_{x}\overline{\sigma(x,\xi)}.
\end{equation}
Moreover, if $n_{0}\in \mathbb{N},$ then 
\begin{equation} \sigma^{*}(x,\xi)- \sum_{|\alpha|<n_{0} }\frac{1}{\alpha !}\Delta^{\alpha}_{\xi}D^{(\alpha)}_{x}\overline{\sigma(x,\xi)}\,\,\,  \in S^{m-n_{0}(\rho-\delta)}_{\rho,\delta}(\mathbb{T}^n\times \mathbb{Z}^n).
\end{equation}
\end{proposition}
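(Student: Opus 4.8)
The plan is to reduce the computation of the adjoint symbol to the toroidal amplitude-to-symbol calculus, and then to extract the asymptotic expansion and its remainder from a discrete Taylor expansion on the lattice $\mathbb{Z}^n$. First I would compute the Schwartz kernel of $a(x,D)$. Writing $a(x,D)u(x)=\int_{\mathbb{T}^n}K(x,y)u(y)\,dy$ with $K(x,y)=\sum_{\xi\in\mathbb{Z}^n}e^{i2\pi\langle x-y,\xi\rangle}a(x,\xi)$ (understood distributionally), the adjoint $a^{*}(x,D)$ has kernel $K^{*}(x,y)=\overline{K(y,x)}=\sum_{\xi}e^{i2\pi\langle x-y,\xi\rangle}\overline{a(y,\xi)}$. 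Hence
\begin{equation}
a^{*}(x,D)u(x)=\sum_{\xi\in\mathbb{Z}^n}\int_{\mathbb{T}^n}e^{i2\pi\langle x-y,\xi\rangle}\,\overline{a(y,\xi)}\,u(y)\,dy,
\end{equation}
which exhibits $a^{*}(x,D)$ as the periodic amplitude operator associated to the amplitude $b(x,y,\xi):=\overline{a(y,\xi)}$, independent of the first variable. The $(\rho,\delta)$-estimates for $a$ transfer immediately to $b$, with the role of the $x$-derivatives played by $y$-derivatives.

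Next I would invoke the toroidal amplitude-to-symbol reduction: an amplitude operator with amplitude $b(x,y,\xi)$ coincides with a symbol operator $\sigma^{*}(x,D)$ whose symbol admits the asymptotic expansion
\begin{equation}
\sigma^{*}(x,\xi)\approx \sum_{\alpha\geq 0}\frac{1}{\alpha!}\Delta^{\alpha}_{\xi}D^{(\alpha)}_{y}b(x,y,\xi)\big|_{y=x}.
\end{equation}
Substituting $b(x,y,\xi)=\overline{a(y,\xi)}$ and evaluating at $y=x$ yields precisely $\sum_{\alpha\geq0}\frac{1}{\alpha!}\Delta^{\alpha}_{\xi}D^{(\alpha)}_{x}\overline{a(x,\xi)}$, which is the claimed expansion. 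Since $\Delta^{\alpha}_{\xi}$ lowers the order by $\rho|\alpha|$ while $D^{(\alpha)}_{x}$ raises it by $\delta|\alpha|$, the $\alpha$-th term lies in $S^{m-|\alpha|(\rho-\delta)}_{\rho,\delta}(\mathbb{T}^n\times\mathbb{Z}^n)$. Because $\delta<\rho$, the orders strictly decrease in $|\alpha|$; the leading term ($\alpha=0$) equals $\overline{a(x,\xi)}$ and has order $m$, so $a^{*}(x,\xi)\in S^{m}_{\rho,\delta}$, and truncating at $|\alpha|<n_{0}$ leaves a remainder of the asserted order $m-n_{0}(\rho-\delta)$.

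The core of the argument, and the main obstacle, is to establish the reduction formula together with the quantitative remainder bound. Here I would fix $N=n_{0}$ and apply the discrete Taylor expansion on $\mathbb{Z}^n$: for $f:\mathbb{Z}^n\to\mathbb{C}$ and $\theta\in\mathbb{Z}^n$ one has $f(\xi+\theta)=\sum_{|\alpha|<N}\frac{1}{\alpha!}\theta^{(\alpha)}(\Delta^{\alpha}_{\xi}f)(\xi)+r_{N}(\xi,\theta)$, where $\theta^{(\alpha)}$ denotes the product of falling factorials and $r_{N}$ is a controlled remainder involving $\Delta^{\alpha}_{\xi}f$ with $|\alpha|=N$. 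Applying this to the inner integral, after expanding $u$ in Fourier series, converts the monomials $\theta^{(\alpha)}$ into the operators $D^{(\alpha)}_{y}$ acting on $b$, which is exactly why the difference operators in $\xi$ pair with the $x$-derivatives in the expansion. The delicate point will be controlling $r_{N}$: one must show that the associated double series in $(\xi,\theta)$ converges and that it defines a symbol satisfying the $(\rho,\delta)$-estimates of order $m-N(\rho-\delta)$, which requires repeated summation-by-parts in $\xi$ against the rapidly decreasing Fourier coefficients of the smooth amplitude, together with the hypothesis $\delta<\rho$ to secure the genuine gain of order at each step. Once the remainder is shown to define a symbol of the stated order, the proposition follows.
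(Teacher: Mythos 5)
The paper itself offers no proof of Proposition \ref{ad}: it is stated in the Preliminaries as a known fact of the toroidal calculus (it is essentially Theorem 4.7.10 of Ruzhansky--Turunen \cite{Ruz}), so there is nothing in the text to compare your argument against line by line. That said, your proposal reconstructs exactly the standard proof from that source: identify the adjoint kernel $\overline{K(y,x)}$, recognize $a^{*}(x,D)$ as the amplitude operator with amplitude $\overline{a(y,\xi)}$, and reduce via the toroidal amplitude-to-symbol theorem, whose expansion is driven by the discrete Taylor formula $f(\xi+\theta)=\sum_{|\alpha|<N}\frac{1}{\alpha!}\theta^{(\alpha)}\Delta^{\alpha}_{\xi}f(\xi)+r_{N}(\xi,\theta)$ and the pairing of the falling-factorial monomials $\theta^{(\alpha)}$ with the operators $D^{(\alpha)}_{y}$ under integration by parts. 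The order bookkeeping $m-\rho|\alpha|+\delta|\alpha|$ and the role of $\delta<\rho$ are handled correctly.

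The only part that remains schematic is the control of the remainder $r_{N}$: you correctly flag it as the delicate step, but you do not actually carry out the summation-by-parts estimates showing that the remainder defines a symbol in $S^{m-N(\rho-\delta)}_{\rho,\delta}(\mathbb{T}^n\times\mathbb{Z}^n)$ rather than merely a term-by-term bound on the series. For a complete proof you would need the explicit integral/sum form of the discrete Taylor remainder (with $|\alpha|=N$ differences evaluated at shifted lattice points) and the Peetre-type inequality to absorb the shift, exactly as in the amplitude-reduction theorem of \cite{Ruz}. Since the proposition is quoted in the paper as background, citing that theorem at this point would be an acceptable way to close the gap; as a self-contained argument, your write-up is a correct outline but not yet a full proof.
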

In order to establish our  result on positive operators,  we introduce amplitude operators. The periodic amplitudes  are functions $a(x,y,\xi)$ defined on $\mathbb{T}^n\times\mathbb{T}^n\times\mathbb{Z}^n.$ The corresponding amplitude operators are defined as
\begin{equation}\label{amplitude}a(x,y,D)u(x)=\int\limits_{\mathbb{T}^n\times \mathbb{R}^n} e^{i2\pi \langle x-y,\xi \rangle}a(x,y,\xi)u(y)dy\,d\xi.
\end{equation}
If the symbol depends only on $(x,\xi)-$variables then $p(x,y,D)=p(x,D).$ Moreover, if a symbol $\sigma(x,\xi)=\sigma(\xi)$ depends only on the Fourier variable $\xi,$ the corresponding pseudo-differential operator $\sigma(x,D)=\sigma(D)$ is called a Fourier multiplier. An instrumental result on Fourier multipliers in the proof of our main results is the following: (see, Theorem 3.8 of Stein  \cite{Eli}).

\begin{proposition}\label{stein}
Suppose $1\leq p\leq \infty$ and $T_\sigma$ be a Fourier multiplier on $ \mathbb{R}^n$ with symbol $\sigma(\xi)$. If  $\sigma(\xi)$  is continuous at each point of $\mathbb{Z}^n$ then the periodic operator defined by
\begin{equation}
\sigma(D)f(x)=\sum_{\xi \in \mathbb{Z}^n}e^{i2\pi \langle x,\xi\rangle}\sigma(\xi)\widehat{u}(\xi),
\end{equation}
is a bounded operator from $L^p(\mathbb{T}^n)$ into $L^p(\mathbb{T}^n).$
\end{proposition}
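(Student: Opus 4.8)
The statement is the classical de Leeuw restriction (transference) principle: sampling a continuous Euclidean $L^p$-multiplier along the integer lattice yields a periodic $L^p$-multiplier of no larger norm. The plan is to transfer the assumed boundedness of $T_\sigma$ on $L^p(\mathbb{R}^n)$ to $\sigma(D)$ on $L^p(\mathbb{T}^n)$ by testing the periodic operator on trigonometric polynomials, which are dense in $L^p(\mathbb{T}^n)$ for $1\le p<\infty$ (the endpoint $p=\infty$ then following by duality from the case $p=1$). First I would reduce to a \emph{smooth} symbol: convolving $\sigma$ with a nonnegative smooth compactly supported approximate identity $\psi_\delta$ produces $\sigma^\delta=\sigma*\psi_\delta$, and since $T_{\sigma^\delta}=\int\psi_\delta(\zeta)\,M_\zeta T_\sigma M_{-\zeta}\,d\zeta$ with $M_\zeta$ a modulation (an $L^p$-isometry), one has $\|T_{\sigma^\delta}\|_{\mathrm{op}}\le\|T_\sigma\|_{\mathrm{op}}$. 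As $\sigma$ is bounded and continuous at each lattice point, $\sigma^\delta(\xi)\to\sigma(\xi)$ for every $\xi$, so it suffices to prove the multiplier inequality for each smooth $\sigma^\delta$ and let $\delta\to0$ at the end.

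Next I would set up the Gaussian transference. Fix a trigonometric polynomial $P(x)=\sum_{\xi\in F}c_\xi e^{2\pi i\langle x,\xi\rangle}$ with $F\subset\mathbb{Z}^n$ finite, put $\Phi(x)=e^{-\pi|x|^2}$, and consider $P_R(x)=P(x)\Phi(x/R)$ on $\mathbb{R}^n$. Its Euclidean transform $\widehat{P_R}(\eta)=\sum_{\xi\in F}c_\xi R^n\Phi(R(\eta-\xi))$ is a sum of Gaussians concentrating at the points of $F$ as $R\to\infty$. Applying $T_{\sigma^\delta}$ multiplies by $\sigma^\delta(\eta)$; replacing $\sigma^\delta(\eta)$ by its value $\sigma^\delta(\xi)$ on each concentrating bump gives the decomposition
\[
T_{\sigma^\delta}P_R=(\sigma^\delta(D)P)(x)\,\Phi(x/R)+E_R,
\]
where $\sigma^\delta(D)P=\sum_{\xi\in F}c_\xi\sigma^\delta(\xi)e^{2\pi i\langle x,\xi\rangle}$ is exactly the periodic operator applied to $P$. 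Because $\sigma^\delta$ is smooth, a first-order Taylor expansion $\sigma^\delta(\xi+y/R)-\sigma^\delta(\xi)=O(|y|/R)$ on the bump centred at $\xi$ shows the inverse transform of the error has a Schwartz envelope of size $O(R^{-1})$ dilated by $R$, whence $\|E_R\|_{L^p(\mathbb{R}^n)}=O(R^{n/p-1})=o(R^{n/p})$.

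Then I would invoke the periodic averaging of the weighted norm. For any $1$-periodic $F$, splitting $\mathbb{R}^n$ into unit cells and recognizing a Riemann sum for $\sum_{k\in\mathbb{Z}^n}\Phi((x+k)/R)^p\sim R^n\int_{\mathbb{R}^n}\Phi^p$ gives
\[
\int_{\mathbb{R}^n}|F(x)|^p\,\Phi(x/R)^p\,dx= R^n\Big(\int_{\mathbb{R}^n}\Phi^p\Big)\,\|F\|_{L^p(\mathbb{T}^n)}^p\,\bigl(1+o(1)\bigr),\qquad R\to\infty.
\]
Applying this with $F=\sigma^\delta(D)P$ and with $F=P$, combining with $\|T_{\sigma^\delta}P_R\|_{L^p}\le\|T_{\sigma^\delta}\|_{\mathrm{op}}\|P_R\|_{L^p}$ and the decomposition above, dividing by $R^{n/p}(\int\Phi^p)^{1/p}$ and letting $R\to\infty$ yields $\|\sigma^\delta(D)P\|_{L^p(\mathbb{T}^n)}\le\|T_\sigma\|_{\mathrm{op}}\|P\|_{L^p(\mathbb{T}^n)}$. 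Finally letting $\delta\to0$ (so $\sigma^\delta(\xi)\to\sigma(\xi)$ on the finite set $F$, hence $\sigma^\delta(D)P\to\sigma(D)P$ uniformly) and using density of trigonometric polynomials extends $\sigma(D)$ to a bounded operator on $L^p(\mathbb{T}^n)$.

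The hard part will be the rigorous control of the error $E_R$ for $p\neq 2$, where Plancherel is unavailable: one must show that replacing $\sigma^\delta(\eta)$ by its lattice values on each Gaussian bump truly costs only $O(R^{n/p-1})$ in $L^p(\mathbb{R}^n)$. This is precisely what the preliminary smoothing buys, since it provides the Taylor estimate and a Schwartz envelope for the inverse transform of the error; without smoothness one has only the $L^2$ argument via dominated convergence. A secondary technical point is the uniformity (in $x$ over a unit cell) of the Riemann-sum limit for the weight $\sum_k\Phi((x+k)/R)^p$, which is routine given the rapid decay of $\Phi$.
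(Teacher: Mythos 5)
The paper does not prove this proposition at all: it is quoted as Theorem 3.8 of Stein--Weiss and the proof is deferred to that reference. Your argument is precisely the standard Gaussian-transference proof of de Leeuw's restriction theorem given there (smoothing $\sigma$ by an approximate identity, testing on $P(x)e^{-\pi|x/R|^2}$, and using the Riemann-sum asymptotics of the periodized Gaussian weight), and it is correct, including the $O(R^{n/p-1})$ error bound and the $p=\infty$ endpoint via duality with $p=1$.
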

The following results will help clarify the nature of the conditions that will be imposed on periodic symbols in order to obtain $L^p$ theorems for periodic operators with symbols of limited regularity.

\begin{proposition}\label{remi1}
Let $0\leq \rho\leq 1$ and $0<\varepsilon \leq 1,$ and suppose that the symbol $\sigma(x,\xi)$ on $\mathbb{R}^n\times \mathbb{R}^n$ satisfies
\begin{equation}
|\partial_{\xi}^{\alpha}\sigma(x,\xi)|\leq C\omega(\langle \xi \rangle^{-\varepsilon} )\langle \xi \rangle^{-\frac{n}{2}(1-\rho)-\rho|\alpha|},\,\,\,\,|\alpha|\leq [\frac{n}{2}]+1,
\end{equation}
where $\omega$ is a non-decreasing, bounded and non-negative function on $[0,\infty)$ satisfying $$\int_{0}^{1}\omega(t)t^{-1}dt <\infty.$$ Then $T_{\sigma}$ is a bounded operator on $L^p(\mathbb{R}^n)$ for all $2\leq p\leq \infty.$
\end{proposition}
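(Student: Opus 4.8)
The plan is to realize $T_{\sigma}$ as an operator that is simultaneously bounded on $L^2(\mathbb{R}^n)$ and from $L^\infty(\mathbb{R}^n)$ into $\mathrm{BMO}(\mathbb{R}^n)$, and then to interpolate. First I would perform a Littlewood--Paley decomposition of the symbol, writing $\sigma=\sum_{j\geq 0}\sigma_j$ with $\sigma_j(x,\xi)=\sigma(x,\xi)\psi_j(\xi)$, where $\psi_0$ is supported in $\langle \xi\rangle\lesssim 1$ and $\psi_j$ in $\langle\xi\rangle\sim 2^j$. On the $j$-th shell the hypotheses give $|\partial_\xi^\alpha\sigma_j(x,\xi)|\lesssim \omega(2^{-j\varepsilon})\,2^{-\frac{n}{2}j(1-\rho)}2^{-j\rho|\alpha|}$ for $|\alpha|\leq[\tfrac{n}{2}]+1$, and integrating by parts in $\xi$ as often as the available regularity permits yields the kernel bound
\begin{equation*}
|K_j(x,y)|\lesssim \omega(2^{-j\varepsilon})\,2^{-\frac{n}{2}j(1-\rho)}\,2^{jn}\,\bigl(1+2^{j\rho}|x-y|\bigr)^{-([\frac{n}{2}]+1)}.
\end{equation*}
These dyadic kernel estimates are the foundation on which all later steps rest.

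For the $L^2$ endpoint I would exploit that the symbol has nonpositive order $-\frac{n}{2}(1-\rho)$ together with the frequency localization: the pieces $T_{\sigma_j}$ are almost orthogonal, since their outputs live on essentially disjoint frequency annuli, so a Cotlar--Stein argument reduces matters to uniform bounds for the $T_{\sigma_j}$, which follow from the shell estimates by the same $[\frac{n}{2}]+1$-derivative Sobolev mechanism that underlies Theorem \ref{l2}. This is where the threshold $k=[\frac{n}{2}]+1$ is exactly what is needed.

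The heart of the matter is the $L^\infty\to\mathrm{BMO}$ bound. Fixing a ball $B=B(x_0,r)$ and splitting $f=f\chi_{2B}+f\chi_{(2B)^c}=:f_1+f_2$, the local term is handled by Cauchy--Schwarz and the $L^2$ bound, $\frac{1}{|B|}\int_B|T_\sigma f_1|\lesssim |B|^{-1/2}\|T_\sigma f_1\|_{L^2}\lesssim \|f\|_{\infty}$. For the far term one subtracts the constant $T_\sigma f_2(x_0)$ and estimates $\frac{1}{|B|}\int_B|T_\sigma f_2(x)-T_\sigma f_2(x_0)|\,dx$ by integrating $\sum_j|K_j(x,y)-K_j(x_0,y)|$ against $f_2$ over $y\in(2B)^c$. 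Summing the shell contributions is the decisive computation: the borderline order $-\frac{n}{2}(1-\rho)$ makes the geometric sum over $j$ just fail to converge, and the gain $\omega(2^{-j\varepsilon})$ is precisely what restores summability, since the substitution $t=2^{-s}$ shows that $\int_0^1\omega(t)t^{-1}\,dt<\infty$ is equivalent to $\sum_{j\geq 0}\omega(2^{-j\varepsilon})<\infty$.

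Interpolating between $L^2\to L^2$ and $L^\infty\to\mathrm{BMO}$ (Fefferman--Stein/Stampacchia interpolation) then gives $T_\sigma:L^p\to L^p$ for $2\leq p<\infty$, while $p=\infty$ is the $L^\infty\to\mathrm{BMO}$ statement itself. The step I expect to be the main obstacle is the far-field $\mathrm{BMO}$ estimate: one must organize the difference $K_j(x,y)-K_j(x_0,y)$ so that the oscillation of the phase $e^{i2\pi\langle x-y,\xi\rangle}$ supplies the cancellation in the output variable $x$ while spending only the $[\frac{n}{2}]+1$ available $\xi$-derivatives, and only then sum over scales. It is exactly at this summation that the Dini integrability of $\omega$ is indispensable, and carrying out the difference estimate with this limited regularity is the technically delicate point.
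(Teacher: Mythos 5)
The paper does not actually prove this proposition; it defers to Theorem 4.4 and Corollary 4.2 of the Ashino--Nagase--Vaillancourt reference \cite{va}, so your attempt must be judged on its own merits, and it has two genuine gaps, both traceable to the fact that the hypotheses control \emph{only} $\xi$-derivatives of $\sigma$ and only $[\frac{n}{2}]+1$ of them. First, your $L^2$ step: since no $x$-regularity is assumed, the outputs $T_{\sigma_j}f$ are \emph{not} supported in disjoint frequency annuli (only the inputs are, via $T_{\sigma_j}=T_{\sigma_j}P_j$), so $\|T_{\sigma_j}^{*}T_{\sigma_k}\|$ has no decay in $|j-k|$ and Cotlar--Stein with merely uniform bounds on the pieces does not close. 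Second, the $L^\infty\to\mathrm{BMO}$ step fails twice over: the far-field term requires estimating $K_j(x,y)-K_j(x_0,y)$, but $K_j$ depends on $x$ through $\sigma(x,\cdot)$ as well as through the phase, and without $x$-derivatives of the symbol there is no way to control that difference; moreover your pointwise bound $(1+2^{j\rho}|x-y|)^{-([\frac{n}{2}]+1)}$ has exponent $\leq n$, so it is not absolutely integrable in $y$ over $(2B)^c$ and the far-field integral as you set it up diverges before the sum over $j$ is even reached. Even if repaired, the BMO route delivers only $L^\infty\to\mathrm{BMO}$ at the endpoint, whereas the statement asserts boundedness \emph{on} $L^\infty$.

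The fix is simpler than what you propose and uses the dyadic decomposition you already have. Writing $T_{\sigma_j}f(x)=\int \widehat{K}_j(x,z)f(x-z)\,dz$ and applying Cauchy--Schwarz with the weight $(1+2^{j\rho}|z|)^{\pm k}$, $k=[\frac{n}{2}]+1>n/2$, Plancherel in $\xi$ (for each fixed $x$) converts the weighted $L^2_z$ norm of the kernel into $\sum_{|\alpha|\leq k}2^{2j\rho|\alpha|}\int|\partial_\xi^\alpha\sigma_j(x,\xi)|^2d\xi\lesssim \omega(2^{-j\varepsilon})^2 2^{jn\rho}$, and the unweighted factor contributes $2^{-jn\rho/2}$; this yields $\|T_{\sigma_j}\|_{L^2\to L^2}+\|T_{\sigma_j}\|_{L^\infty\to L^\infty}\lesssim\omega(2^{-j\varepsilon})$ with no $x$-regularity and no kernel differences needed. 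The Dini condition, exactly as you observe, is equivalent to $\sum_j\omega(2^{-j\varepsilon})<\infty$, so the triangle inequality sums the pieces at both endpoints $p=2$ and $p=\infty$, and Riesz--Thorin gives $2\leq p\leq\infty$. Your instinct about where the Dini condition enters is correct; the error is in routing the argument through orthogonality and BMO, neither of which is available under these hypotheses.
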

\begin{proof}
See Theorem 4.4 and Corollary 4.2 of \cite{va}.
\end{proof}
The following is a version of the Fefferman theorem but symbols are considered with limited smoothness. 
\begin{proposition}\label{remi2}
Let $2\leq p<\infty$ and $0\leq \delta\leq \rho\leq 1,$ $\delta<1.$ Let $\sigma(x,\xi)$ be a symbol satisfying
\begin{equation}
|\partial_{x}^{\beta}\partial_{\xi}^{\alpha}\sigma(x,\xi)|\leq C\langle \xi \rangle^{-m_{p}-\rho|\alpha|+\delta|\beta|},\,\,\,\,|\alpha|,|\beta|\leq [\frac{n}{2}]+1,
\end{equation}
where $m_{p}=n(1-\rho)|\frac{1}{p}-\frac{1}{2}|.$  Then $T_\sigma:L^p(\mathbb{R}^n)\rightarrow L^p(\mathbb{R}^n)$ is a bounded operator.
\end{proposition}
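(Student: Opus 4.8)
The plan is to obtain the bound by complex interpolation (Stein's theorem for analytic families of operators) between an $L^2$ estimate at order $0$ and an $L^\infty$-type estimate at the critical order $-\tfrac{n}{2}(1-\rho)$, arranging the orders so that the interpolated operator is exactly $T_\sigma$ acting on $L^p$. Throughout I use that for $p\ge 2$ one has $m_p=n(1-\rho)(\tfrac12-\tfrac1p)$, so that the target order $-m_p$ lies between $0$ (at $p=2$) and $-\tfrac n2(1-\rho)=-m_\infty$ (at $p=\infty$).

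First I would introduce the analytic family of symbols
\[
\sigma_z(x,\xi)=\langle\xi\rangle^{\,m_p-\frac{n}{2}(1-\rho)z}\,\sigma(x,\xi),\qquad 0\le\Re z\le 1,
\]
which depends analytically on $z$ and reduces to $\sigma$ at the real value $z=\theta$ for which $m_p=\tfrac{n}{2}(1-\rho)\theta$, that is $\theta=1-\tfrac2p$. The modulus of the extra factor on a vertical line is $\langle\xi\rangle^{m_p-\frac{n}{2}(1-\rho)\Re z}$, while differentiating its imaginary part $\langle\xi\rangle^{-i\frac{n}{2}(1-\rho)\Im z}$ only produces factors polynomial in $|\Im z|$; hence the symbol seminorms of $\sigma_z$ grow at most polynomially in $|\Im z|$ on each line, which is the admissibility required by Stein's theorem.

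On the line $\Re z=0$ the symbol $\sigma_{it}$ has order $0$ and satisfies $|\partial_x^\beta\partial_\xi^\alpha\sigma_{it}(x,\xi)|\le C(1+|t|)^N\langle\xi\rangle^{-\rho|\alpha|+\delta|\beta|}$ for $|\alpha|,|\beta|\le[\tfrac n2]+1$; because $\delta\le\rho$ and $\delta<1$ this is a symbol of type $S^0_{\rho,\delta}$, so the Calder\'on-Vaillancourt theorem in its limited-regularity form (which uses only $[\tfrac n2]+1$ derivatives in each variable) gives $\|T_{\sigma_{it}}\|_{L^2\to L^2}\le C(1+|t|)^N$. On the line $\Re z=1$ the symbol $\sigma_{1+it}$ has order exactly $-\tfrac n2(1-\rho)$ with controlled $\xi$-derivatives up to order $[\tfrac n2]+1$, which is precisely the hypothesis of Proposition \ref{remi1}; hence $T_{\sigma_{1+it}}$ is bounded on every $L^q$ with $2\le q\le\infty$, again with at most polynomial growth in $|t|$. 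Interpolating at $z=\theta=1-\tfrac2p$ between the $L^2$ line and the $L^\infty$ line, the intermediate exponent satisfies $\tfrac1{p_\theta}=\tfrac{1-\theta}2=\tfrac1p$ and $\sigma_\theta=\sigma$, so Stein's theorem yields that $T_\sigma$ is bounded on $L^p(\mathbb{R}^n)$, which is the claim.

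The delicate point is the top endpoint. At $\Re z=1$ the symbol has order exactly $-\tfrac n2(1-\rho)=-m_\infty$ with no additional decay, so invoking Proposition \ref{remi1} there demands a genuinely Dini-admissible modulus $\omega$, whereas a constant $\omega$ is forbidden since $\int_0^1 t^{-1}\,dt=\infty$. I would handle this either by incorporating a fixed Dini modulus into the analytic family, multiplying $\sigma_z$ by an analytic factor that equals such an $\omega(\langle\xi\rangle^{-\varepsilon})$ on $\Re z=1$ and equals $1$ at $z=\theta$, or, more robustly, by replacing the $L^\infty$ endpoint with the corresponding $L^\infty\to\mathrm{BMO}$ bound at order $-m_\infty$ and interpolating in the scale $[L^2,\mathrm{BMO}]_\theta=L^p$. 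Proving this $L^\infty\to\mathrm{BMO}$ estimate (equivalently $H^1\to L^1$) at the critical order using only $[\tfrac n2]+1$ derivatives is exactly Fefferman's theorem adapted to limited smoothness, and is where the real work lies; the $L^2$ side and the order-and-exponent bookkeeping are routine.
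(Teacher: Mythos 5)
There is a genuine gap. The paper does not prove this proposition internally at all: it simply invokes Theorem 5.1 and Corollary 5.1 of Ashino--Nagase--Vaillancourt \cite{va}, where the limited-smoothness Fefferman estimate is established. Your outline reproduces the standard architecture of that proof (analytic interpolation between an order-$0$ $L^2$ bound and a critical-order $-\tfrac n2(1-\rho)$ endpoint, with the correct bookkeeping $\theta=1-\tfrac2p$, $\sigma_\theta=\sigma$), but you explicitly defer the decisive step --- the endpoint estimate at order $-\tfrac n2(1-\rho)$ with only $[\tfrac n2]+1$ derivatives, whether phrased as $L^\infty\to\mathrm{BMO}$ or $H^1\to L^1$ --- with the words ``where the real work lies.'' That endpoint \emph{is} the proposition, up to routine interpolation; reducing the statement to it is not a proof but a restatement of the problem in the form in which \cite{va} solves it. As written, the proposal establishes nothing beyond the (correct) observation that the $L^2$ side and the exponent arithmetic are easy.

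Two of the specific devices you propose to close the gap are also problematic. First, the fix by an analytic Dini factor cannot work as described: any non-decreasing $\omega$ with $\int_0^1\omega(t)t^{-1}\,dt<\infty$ must satisfy $\omega(0^+)=0$, so a factor of the form $\omega(\langle\xi\rangle^{-\varepsilon})^{(z-\theta)/(1-\theta)}$, which equals $1$ at $z=\theta$ and $\omega$ on $\Re z=1$, has modulus $\omega(\langle\xi\rangle^{-\varepsilon})^{-\theta/(1-\theta)}\to\infty$ as $|\xi|\to\infty$ on the line $\Re z=0$, destroying the order-$0$ symbol bounds there; Proposition \ref{remi1} genuinely does not apply to a bare symbol of order $-\tfrac n2(1-\rho)$ without a vanishing modulus. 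Second, the $L^2$ endpoint needs care when $\rho=1$: Calder\'on--Vaillancourt in the form you cite covers $\rho=\delta<1$, while $S^0_{1,\delta}$, $\delta<1$, requires the Calder\'on--Zygmund route instead, and in either case the claim that $[\tfrac n2]+1$ derivatives in \emph{each} variable suffice is itself a nontrivial limited-regularity refinement that should be referenced or proved, not asserted. The honest conclusion is that your argument is a correct high-level roadmap to the result the paper cites, but it does not constitute an independent proof.
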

\begin{proof} See Theorem 5.1 and Corollary 5.1 of \cite{va}.
\end{proof}
The following theorem is the particular case of one proved in \cite{Ruz3} for compact Lie groups.
\begin{theorem}\label{ruz3}
Let $k>\frac{n}{2}$ be an even integer. If $\sigma(\xi)$ is a periodic symbol on $\mathbb{Z}^n$ satisfying
\begin{equation}
|\Delta^{\alpha}_{\xi}\sigma(\xi)|\leq C_{\alpha}\langle \xi \rangle^{-|\alpha|},  \,\,\,|\alpha|\leq k,
\end{equation}
then, the corresponding periodic operator $\sigma(D)$ is of weak type (1,1) and $L^p$-bounded for all $1<p<\infty.$
\end{theorem}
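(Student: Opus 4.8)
The plan is to run the Calder\'on--Zygmund machinery directly on $\mathbb{T}^n$. Since $\sigma$ is independent of $x$, the operator $\sigma(D)$ is convolution by the periodic distribution $K$ with $\widehat{K}(\xi)=\sigma(\xi)$, i.e. $\sigma(D)f=K\ast f$. The hypothesis with $|\alpha|=0$ gives $\|\sigma\|_{\ell^{\infty}(\mathbb{Z}^n)}\leq C$, so Parseval's identity on $\mathbb{T}^n$ yields at once that $\sigma(D)$ is bounded on $L^2(\mathbb{T}^n)$. It therefore suffices to verify the H\"ormander integral condition
\[
\sup_{y}\int_{|x|>2|y|}|K(x-y)-K(x)|\,dx<\infty ;
\]
the weak $(1,1)$ bound then follows from the Calder\'on--Zygmund theorem, $L^p$-boundedness for $1<p\leq 2$ from Marcinkiewicz interpolation with the $L^2$ estimate, and the range $2\leq p<\infty$ by duality, since the adjoint of $\sigma(D)$ is $\overline{\sigma}(D)$ and $\overline{\sigma}$ satisfies the same difference inequalities.

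The engine for the kernel estimates is a dyadic decomposition in frequency combined with a discrete summation by parts. I fix a partition of unity $1=\sum_{\ell\geq 0}\psi_{\ell}$ on $\mathbb{Z}^n$ with $\psi_{\ell}$ supported where $\langle\xi\rangle\sim 2^{\ell}$ and $|\Delta^{\alpha}\psi_{\ell}|\lesssim 2^{-\ell|\alpha|}$, and I set $\sigma_{\ell}:=\sigma\psi_{\ell}$, $K_{\ell}(x):=\sum_{\xi}\sigma_{\ell}(\xi)e^{i2\pi\langle x,\xi\rangle}$. A discrete Leibniz rule gives $|\Delta^{\alpha}\sigma_{\ell}|\lesssim 2^{-\ell|\alpha|}$ for $|\alpha|\leq k$ on the $O(2^{\ell n})$ lattice points of the support. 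Shifting the summation index yields the identity
\[
\prod_{j=1}^{n}\bigl(e^{-i2\pi x_{j}}-1\bigr)^{\alpha_{j}}K_{\ell}(x)=\sum_{\xi\in\mathbb{Z}^n}(\Delta^{\alpha}_{\xi}\sigma_{\ell})(\xi)\,e^{i2\pi\langle x,\xi\rangle},
\]
and since $|e^{-i2\pi x_{j}}-1|=2|\sin\pi x_{j}|\gtrsim|x_{j}|$ on a fundamental domain, Parseval gives $\bigl\|\prod_{j}|x_{j}|^{\alpha_{j}}K_{\ell}\bigr\|_{2}^{2}\lesssim\sum_{\xi}|\Delta^{\alpha}_{\xi}\sigma_{\ell}|^{2}\lesssim 2^{\ell n}2^{-2\ell|\alpha|}$. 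Because $k$ is an even integer, $|x|^{2k}=(\sum_{j}x_{j}^{2})^{k}$ is a genuine polynomial, so expanding it multinomially and summing the previous bound over $|\alpha|=k$ produces the clean weighted estimate
\[
\bigl\||x|^{k}K_{\ell}\bigr\|_{L^{2}(\mathbb{T}^n)}^{2}\lesssim 2^{\ell n}2^{-2\ell k}.
\]

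The remaining estimates are routine but show exactly where $k>\tfrac{n}{2}$ enters. Combining the $|\alpha|=0$ and $|\alpha|=k$ $L^2$ bounds through Cauchy--Schwarz gives the tail estimate $\int_{|x|>\rho}|K_{\ell}|\,dx\lesssim(2^{\ell}\rho)^{\frac{n}{2}-k}$, the integral $\int_{|x|>\rho}|x|^{-2k}\,dx$ converging precisely because $2k>n$; the same computation for $\nabla K_{\ell}$, whose Fourier coefficients are $O(2^{\ell})$ on the support of $\sigma_{\ell}$, gives $\int_{\mathbb{T}^n}|\nabla K_{\ell}|\,dx\lesssim 2^{\ell}$ after splitting at $|x|\sim 2^{-\ell}$. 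To conclude I split the series $K=\sum_{\ell}K_{\ell}$ according to $y$: for the low frequencies $2^{\ell}|y|\leq 1$ I bound the difference by the mean value inequality and the gradient estimate, contributing $\sum_{2^{\ell}|y|\leq 1}2^{\ell}|y|\lesssim 1$; for the high frequencies $2^{\ell}|y|>1$ I use the triangle inequality and the tail estimate, contributing $\sum_{2^{\ell}|y|>1}(2^{\ell}|y|)^{\frac{n}{2}-k}\lesssim 1$, this last geometric series converging exactly because $k>\tfrac{n}{2}$. Summing the two regimes gives the H\"ormander condition uniformly in $y$, which completes the argument. The main obstacle is this organization of the two frequency regimes so that both dyadic tails are summable; this is precisely the step that forces the regularity threshold $k>\tfrac{n}{2}$, consistent with the choice $k=[\tfrac{n}{2}]+1$ used throughout the paper. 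As an alternative route, one may instead extend $\sigma$ to a Euclidean Mikhlin--H\"ormander multiplier via a result of the type of Corollary \ref{eqc'} and transfer the $L^p$ conclusion by Proposition \ref{stein}, though this does not directly deliver the weak $(1,1)$ endpoint.
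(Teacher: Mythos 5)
Your argument is correct, but it is worth noting that the paper itself does not prove Theorem \ref{ruz3} at all: it is stated as the toroidal special case of the multiplier theorem of Ruzhansky and Wirth in \cite{Ruz3} and simply cited. What you have done is supply a self-contained proof on $\mathbb{T}^n$, and in the abelian setting it essentially reconstructs the Calder\'on--Zygmund/H\"ormander--Mikhlin mechanism that underlies the cited result: $L^2$-boundedness from $\|\sigma\|_{\ell^\infty}$ via Parseval, a dyadic decomposition $\sigma=\sum_\ell\sigma\psi_\ell$, the summation-by-parts identity converting $\Delta^\alpha_\xi$ into multiplication of $K_\ell$ by $\prod_j(e^{-i2\pi x_j}-1)^{\alpha_j}$, weighted $L^2$ bounds $\||x|^kK_\ell\|_2^2\lesssim 2^{\ell n}2^{-2\ell k}$, and the two-regime summation over $2^\ell|y|\lessgtr 1$ yielding the H\"ormander integral condition. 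All the individual steps check out: the identity for $|\alpha|=1$ is a correct index shift and iterates, the lower bound $|e^{-i2\pi x_j}-1|\gtrsim|x_j|$ holds on the fundamental domain, the multinomial expansion of $|x|^{2k}$ is where the evenness of $k$ is genuinely used (though integrality alone would suffice via $|x|^{2k}\lesssim\sum_jx_j^{2k}$), and both dyadic tails converge precisely because $k>\tfrac{n}{2}$. The trade-off between the two routes is clear: the citation is shorter and covers arbitrary compact Lie groups, where the difference operators act on the unitary dual and the kernel estimates are subtler; your proof is elementary, stays entirely on the torus, delivers the weak $(1,1)$ endpoint explicitly (which, as you rightly observe, the transference route through Proposition \ref{stein} does not give), and makes the role of each hypothesis transparent. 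One small point to make explicit if you write this up: the Calder\'on--Zygmund decomposition you invoke must be performed on $\mathbb{T}^n$ rather than $\mathbb{R}^n$, which is legitimate since the torus with its natural metric and Haar measure is a doubling space, but deserves a sentence.
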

\noindent Also, weak(1,1) boundedness of periodic operators has been considered by the author in \cite{Duvan3}.
We end this section with the following result proved in \cite{DR4}. Although, the original theorem is valid  for general compact Lie groups, we present the periodic version for simplicity.
\begin{theorem}\label{Ruz-Delg}
Let $0\leq \delta,\rho\leq 1$ and $1<p<\infty.$ Denote by $\kappa$ the smallest even integer larger than $\frac{n}{2}.$ Let $\sigma(x,D)$ be a pseudo-differential operator with symbol $\sigma$ satisfying
\begin{equation}\label{DRR}
|\partial_{x}^{\beta}\Delta_{\xi}^{\alpha}\sigma(x,\xi) |\leq C\langle \xi\rangle^{-m_0-\rho|\alpha|+\delta|\beta|},
\end{equation}
for all $|\alpha|\leq \kappa$ and $|\beta|\leq [\frac{n}{p}]+1,$ where $m_{0}:=\kappa (1-\rho) |1/p-1/2|+\delta([\frac{n}{p}]+1).$ Then $\sigma(x,D)$ extends to a bounded operator on $L^p(\mathbb{T}^n)$ for all $1<p<\infty.$
\end{theorem}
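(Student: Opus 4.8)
The plan is to deduce the statement as the specialisation to $G=\mathbb{T}^n$ of the compact Lie group theorem of Delgado and Ruzhansky \cite{DR4}, and to indicate the self-contained mechanism that underlies it. First I would set up the dictionary between the two pictures. On a general compact Lie group the unitary dual carries matrix-valued symbols together with abstract difference operators; for $G=\mathbb{T}^n$ the dual collapses to the lattice $\mathbb{Z}^n$ of one-dimensional characters $x\mapsto e^{i2\pi\langle x,\xi\rangle}$, the symbols become the scalar functions $\sigma(x,\xi)$ of \eqref{cs}, and the abstract difference operators reduce exactly to the forward differences $\Delta_\xi^\alpha$. Under this correspondence the hypotheses of the group theorem, namely $\kappa$ differences in the dual variable with $\kappa$ the smallest even integer exceeding $n/2$ (the evenness being what allows a Laplacian-type operator of order $\kappa$ to be applied in the kernel estimates), $[\tfrac{n}{p}]+1$ derivatives in $x$, and order $-m_0$ with $m_0=\kappa(1-\rho)|1/p-1/2|+\delta([\tfrac{n}{p}]+1)$, translate verbatim into \eqref{DRR}. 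Hence $L^p(\mathbb{T}^n)$-boundedness follows directly from \cite{DR4}.

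To expose the analytic content from inside the periodic framework I would proceed as follows. The endpoint $p=2$ is immediate: there $m_0=\delta([\tfrac{n}{2}]+1)$, so specialising \eqref{DRR} to $\alpha=0$ gives $|\partial_x^\beta\sigma(x,\xi)|\le C$ for all $|\beta|\le[\tfrac{n}{2}]+1$, which is precisely the hypothesis of the Ruzhansky--Turunen Theorem \ref{l2}. For the remaining exponents the engine is a Fefferman-type dyadic analysis in the frequency lattice. Using Proposition \ref{eqcc} (valid for $0<\rho\le1$) I would replace $\sigma$ by a Euclidean symbol $\sigma'$ on $\mathbb{T}^n\times\mathbb{R}^n$ satisfying the corresponding $(\rho,\delta)$-inequalities with $\partial_\xi^\alpha$ in place of $\Delta_\xi^\alpha$, write the kernel $K(x,z)=\sum_{\xi}e^{i2\pi\langle z,\xi\rangle}\sigma'(x,\xi)$, and summing by parts against the $\Delta_\xi^\alpha$-bounds would yield the off-diagonal decay needed to decompose $\sigma'$ dyadically in $\xi$ and estimate each piece by its $L^2$ norm. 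The precise value of $m_0$ is exactly what renders the dyadic pieces summable once the $L^2$ bound is interpolated against the two competing contributions, the $\xi$-decay term $\kappa(1-\rho)|1/p-1/2|$ and the $x$-regularity term $\delta([\tfrac{n}{p}]+1)$. For $1<p<2$ I would reduce to $2\le p<\infty$ by duality, noting that $|1/p-1/2|$ is invariant under $p\mapsto p'$ while $[\tfrac{n}{p}]+1\ge[\tfrac{n}{p'}]+1$, so the adjoint symbol furnished by Proposition \ref{ad} already satisfies the $(\rho,\delta)$-estimate required at the conjugate exponent.

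The main obstacle is this kernel-plus-interpolation step in its full generality rather than the soft reduction. Achieving the sharp order $-m_0$, with the exact pairing of the $\kappa$ differences against $|1/p-1/2|$ and of the $[\tfrac{n}{p}]+1$ $x$-derivatives against $\delta$, demands an honest almost-orthogonality estimate for the dyadic pieces $\sigma'_j(x,D)$ in the spirit of Fefferman; Proposition \ref{remi2} realises exactly this mechanism, but only in the subrange $2\le p<\infty$, $\delta\le\rho$, $\delta<1$, and with $[\tfrac{n}{2}]+1$ derivatives, so it does not by itself cover the present hypotheses. Two further points need care: the endpoint $\rho=1$, where Proposition \ref{eqcc} degenerates and the argument must be run directly on $\mathbb{Z}^n$ through the summation-by-parts form of $\Delta_\xi^\alpha$ (as in Theorem \ref{ruz3}); and the regime $\delta\ge\rho$, where Proposition \ref{ad} is unavailable and the duality shortcut must be replaced by the direct estimate of \cite{DR4}. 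This is why the clean route is to quote the general theorem and specialise it, the self-contained argument serving mainly to explain why the stated order $m_0$ is the natural one.
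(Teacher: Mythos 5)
Your primary route---quoting the compact Lie group theorem of Delgado--Ruzhansky \cite{DR4} and specialising it to $G=\mathbb{T}^n$, where the unitary dual reduces to $\mathbb{Z}^n$ and the abstract difference operators become $\Delta_\xi^\alpha$---is exactly what the paper does: Theorem \ref{Ruz-Delg} is stated there as a known result imported from \cite{DR4} with no independent proof. Your supplementary self-contained sketch goes beyond what the paper contains (and you correctly flag its own gaps at $\rho=1$ and $\delta\ge\rho$), but since you yourself conclude that the clean argument is the citation, the proposal matches the paper's approach.
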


\section{Main Results-Proofs}

In this section we prove our main results. we discuss that conditions on the periodic symbol $\sigma(x,\xi)$ guarantee the $L^p-$boundedness of the corresponding pseudo-differential operator.\\ 

\begin{lemma}\label{LE1}
Let $0<\varepsilon\leq 1,$ $0\leq \rho\leq 1, $ $1\leq p\leq\infty$ and $\omega:[0,\infty)\rightarrow [0,\infty) $ be a non-decreasing function such that 
\begin{equation}
\int_{0}^1\omega(t)t^{-1}dt<\infty.
\end{equation}
If $\sigma_{1}: \mathbb{R}^n\rightarrow \mathbb{C}$ is a symbol satisfying
\begin{equation}
|\partial^{\alpha}_{\xi}\sigma_{1}(\xi)|\leq C\omega(\langle \xi \rangle^{-\varepsilon})\langle \xi \rangle^{-\frac{n}{2}(1-\rho)-\rho|\alpha|}
\end{equation}
for all $\alpha$  with $|\alpha|\leq [\frac{n}{2}]+1,$  then the corresponding periodic operator $\sigma(D)$ with symbol $\sigma(\xi)=\sigma_{1}(\xi)|_{ \mathbb{Z}^n}$ is a bounded operator on $L^p(\mathbb{T}^n).$
\end{lemma}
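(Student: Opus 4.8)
The plan is to reduce the statement to the corresponding Euclidean multiplier estimate of Proposition \ref{remi1} via the transference principle of Proposition \ref{stein}, and then to recover the range $1\le p<2$ by duality. Since $\sigma_1$ does not depend on $x$, it is in particular a symbol of the type considered in Proposition \ref{remi1} (with trivial dependence on the spatial variable). I first note that although the hypothesis of Lemma \ref{LE1} does not explicitly assume $\omega$ bounded, only the values $\omega(\langle\xi\rangle^{-\varepsilon})$ with $\langle\xi\rangle\ge 1$ enter, so $\omega$ is evaluated on $(0,1]$ only, where it is automatically bounded by $\omega(1)$ because it is non-decreasing. Hence Proposition \ref{remi1} applies and $T_{\sigma_1}$ is bounded on $L^p(\mathbb{R}^n)$ for every $2\le p\le\infty$.

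Next I would transfer this to the torus. The symbol $\sigma_1$ is smooth, hence continuous at every point of $\mathbb{Z}^n$, so Proposition \ref{stein} applies with the multiplier $T_{\sigma_1}$ on $\mathbb{R}^n$: for each $2\le p\le\infty$ the periodic multiplier $\sigma(D)$ with symbol $\sigma=\sigma_1|_{\mathbb{Z}^n}$ extends to a bounded operator on $L^p(\mathbb{T}^n)$. This settles the range $2\le p\le\infty$.

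For $1\le p<2$ I would argue by duality. The formal $L^2(\mathbb{T}^n)$-adjoint of the Fourier multiplier $\sigma(D)$ is the Fourier multiplier $\overline{\sigma}(D)$ with symbol $\overline{\sigma(\xi)}$; indeed, by Parseval's identity $\langle\sigma(D)u,v\rangle=\langle u,\overline{\sigma}(D)v\rangle$ for all trigonometric polynomials $u,v$. Since $|\partial^\alpha_\xi\overline{\sigma_1}|=|\partial^\alpha_\xi\sigma_1|$, the symbol $\overline{\sigma_1}$ satisfies exactly the same estimates as $\sigma_1$; hence, by the two steps above applied to $\overline{\sigma_1}$, the operator $\overline{\sigma}(D)$ is bounded on $L^{p'}(\mathbb{T}^n)$ for every $2\le p'\le\infty$. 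Writing $\tfrac1p+\tfrac1{p'}=1$ with $1\le p\le 2$ and $2\le p'\le\infty$, for a trigonometric polynomial $u$ one has
\begin{equation*}
\|\sigma(D)u\|_{L^p}=\sup_{\|v\|_{L^{p'}}\le 1}|\langle\sigma(D)u,v\rangle|=\sup_{\|v\|_{L^{p'}}\le 1}|\langle u,\overline{\sigma}(D)v\rangle|\le \|\overline{\sigma}(D)\|_{L^{p'}\to L^{p'}}\,\|u\|_{L^p},
\end{equation*}
where for $p=1$ (so $p'=\infty$) the supremum may be taken over trigonometric polynomials $v$, since $\sigma(D)u$ is itself a trigonometric polynomial and these are uniformly dense in $C(\mathbb{T}^n)$. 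Extending by density of trigonometric polynomials in $L^p(\mathbb{T}^n)$ for $1\le p<\infty$ then yields boundedness for $1\le p<2$, and together with the previous step $\sigma(D)$ is bounded on $L^p(\mathbb{T}^n)$ for all $1\le p\le\infty$.

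The steps for $2\le p\le\infty$ are essentially immediate consequences of Propositions \ref{remi1} and \ref{stein}, so the only genuinely delicate point is the endpoint $p=1$ in the duality step: one must ensure that the $L^1$-norm of the (finitely supported) output is correctly recovered by pairing against $L^\infty$, equivalently continuous, functions, for which the boundedness of $\overline{\sigma}(D)$ on $L^\infty(\mathbb{T}^n)$ provided by the $p'=\infty$ case is exactly what is needed.
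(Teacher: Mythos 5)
Your argument follows the same core route as the paper: the paper's proof of Lemma \ref{LE1} is exactly the two-step reduction you describe, namely Proposition \ref{remi1} for the $L^p(\mathbb{R}^n)$-boundedness of $T_{\sigma_1}$ followed by the transference result of Proposition \ref{stein}. The one place where you go beyond the paper is the range $1\le p<2$: Proposition \ref{remi1} is stated only for $2\le p\le\infty$, and the paper's proof invokes it together with Proposition \ref{stein} without further comment, leaving the lower range implicit; your duality step (passing to the conjugate multiplier $\overline{\sigma}(D)$, which satisfies the same symbol estimates, and using that $L^{p'}$ norms the $L^p$-norm on trigonometric polynomials, with the extra care you take at $p=1$, $p'=\infty$) fills that in correctly. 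Equivalently one could perform the duality already on $\mathbb{R}^n$, where a Fourier multiplier bounded on $L^{p'}(\mathbb{R}^n)$ is automatically bounded on $L^{p}(\mathbb{R}^n)$, and then transfer; either way the conclusion for $1\le p\le\infty$ is justified. Your preliminary observation that $\omega$ is only ever evaluated on $(0,1]$, where monotonicity bounds it by $\omega(1)$, is also a legitimate way to reconcile the hypotheses of the lemma with the boundedness assumption on $\omega$ in Proposition \ref{remi1}.
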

\begin{proof} Proposition \ref{remi1} provides the $L^p(\mathbb{R}^n)$-boundedness of $T_{\sigma_1}.$ Now, by Proposition \ref{stein}, the pseudo-differential operator with symbol $\sigma(\xi)$ is $L^{p}(\mathbb{T}^n)-$bounded.
\end{proof}

\begin{theorem}\label{IA} Let $0<\varepsilon\leq 1,$ $0\leq \rho\leq 1, $ $1\leq p<\infty$ and $\omega:[0,\infty)\rightarrow [0,\infty) $ be a bounded and non-decreasing function such that 
\begin{equation}
\int_{0}^1\omega(t)t^{-1}dt<\infty.
\end{equation}
If $\sigma:\mathbb{T}^n\times \mathbb{R}^n\rightarrow \mathbb{C}$ is a symbol satisfying
\begin{equation}
|\partial^{\alpha}_{\xi} \partial_{x}^\beta\sigma(x,\xi)|\leq C\omega(\langle \xi \rangle^{-\varepsilon})\langle \xi \rangle^{-\frac{n}{2}(1-\rho)-\rho|\alpha|}
\end{equation}
for $|\alpha|\leq [\frac{n}{2}]+1,$ $|\beta|\leq [\frac{n}{p}]+1,$ then the corresponding periodic operator $\sigma(x,D)$  is a bounded linear operator on $L^p(\mathbb{T}^n).$
\end{theorem}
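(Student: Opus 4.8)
The plan is to reduce the variable--coefficient operator $\sigma(x,D)$ to a family of \emph{frozen--coefficient} Fourier multipliers, each governed by Lemma \ref{LE1}, and then to glue the resulting estimates together by means of a Sobolev embedding in an auxiliary copy of the space variable. Concretely, for a fixed $f\in C^{\infty}(\mathbb{T}^n)$ I would introduce the two--variable function
\[
H(x,y):=\sum_{\xi\in\mathbb{Z}^n}e^{i2\pi\langle x,\xi\rangle}\,\sigma(y,\xi)\,\widehat{f}(\xi),
\]
so that the operator is the restriction to the diagonal, $\sigma(x,D)f(x)=H(x,x)$, while for each fixed $y$ the slice $x\mapsto H(x,y)$ is precisely the frozen multiplier $\sigma(y,D)f$ (with $y$ treated as a constant parameter).

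Next I would observe that freezing the first slot converts the hypothesis of the theorem into the hypothesis of Lemma \ref{LE1}. Indeed, for each fixed $y$ and each $\gamma$ with $|\gamma|\le[\tfrac{n}{p}]+1$, the symbol $\xi\mapsto\partial_y^{\gamma}\sigma(y,\xi)$ satisfies $|\partial_\xi^{\alpha}\partial_y^{\gamma}\sigma(y,\xi)|\le C\,\omega(\langle\xi\rangle^{-\varepsilon})\langle\xi\rangle^{-\frac{n}{2}(1-\rho)-\rho|\alpha|}$ for $|\alpha|\le[\tfrac{n}{2}]+1$, with $C$ independent of $y$. Hence Lemma \ref{LE1} yields $\|(\partial_y^{\gamma}\sigma)(y,D)f\|_{L^p(\mathbb{T}^n)}\le C\|f\|_{L^p(\mathbb{T}^n)}$ uniformly in $y$; equivalently, $\|\partial_y^{\gamma}H(\cdot,y)\|_{L^p(dx)}\le C\|f\|_{L^p}$ for every $y\in\mathbb{T}^n$.

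The heart of the argument is to recover the diagonal value $H(x,x)$ from these slice estimates. Since $[\tfrac{n}{p}]+1>\tfrac{n}{p}$ strictly, the Sobolev embedding $W^{[n/p]+1,\,p}(\mathbb{T}^n)\hookrightarrow L^{\infty}(\mathbb{T}^n)$ is available in the $y$--variable, so for each fixed $x$,
\[
|H(x,x)|\le C\sum_{|\gamma|\le[\frac{n}{p}]+1}\Big(\int_{\mathbb{T}^n}|\partial_y^{\gamma}H(x,y)|^{p}\,dy\Big)^{1/p}.
\]
Raising to the $p$--th power, integrating in $x$, using that the $\gamma$--sum is finite, and then applying Tonelli to exchange the $x$ and $y$ integrations, I would bound $\|\sigma(x,D)f\|_{L^p(dx)}^{p}$ by $C\sum_{\gamma}\int_{\mathbb{T}^n}\|\partial_y^{\gamma}H(\cdot,y)\|_{L^p(dx)}^{p}\,dy$. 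By the uniform slice estimate the inner quantity is $\le C\|f\|_{L^p}^{p}$ for every $y$, and since $\mathbb{T}^n$ has finite measure the $y$--integral merely contributes a constant. This gives $\|\sigma(x,D)f\|_{L^p}\le C\|f\|_{L^p}$ for all $1\le p<\infty$, after which a density argument extends the bound from $C^{\infty}(\mathbb{T}^n)$ to $L^{p}(\mathbb{T}^n)$.

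The step I expect to be the main obstacle is exactly the passage from the frozen (multiplier) bounds to the variable--coefficient operator. The naive route --- expanding $\sigma$ in a Fourier series in $x$ and summing the multiplier norms of the pieces --- produces the series $\sum_{\eta\in\mathbb{Z}^n}\langle\eta\rangle^{-([n/p]+1)}$, which converges only for $p=1$; for $p>1$ the triangle inequality is simply too lossy. The device of introducing the auxiliary variable $y$ and invoking the embedding $W^{[n/p]+1,\,p}\hookrightarrow L^{\infty}$ is what lets the $[\tfrac{n}{p}]+1$ derivatives in $x$ suffice across the whole range $1\le p<\infty$, endpoint $p=1$ included; verifying that this embedding genuinely holds (the strict inequality $[\tfrac{n}{p}]+1>\tfrac{n}{p}$) and that all the Lemma \ref{LE1} constants are uniform in $y$ is the crux of the matter.
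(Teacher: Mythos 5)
Your proposal is correct and follows essentially the same route as the paper: your $H(x,y)$ is exactly the paper's $(f\ast r(y))(x)$, and the paper likewise freezes the $x$-variable to get uniform multiplier bounds from Lemma \ref{LE1}, applies the Sobolev embedding $W^{[n/p]+1,p}(\mathbb{T}^n)\hookrightarrow L^{\infty}(\mathbb{T}^n)$ in the auxiliary variable to control the diagonal, and then uses Fubini together with the uniform slice estimates. No substantive difference.
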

\begin{proof} 
Let us consider the Schwartz kernel $K(x,z)$ of $\sigma(x,D)$ which is given by $K(x,y)=r(x-y,x)$ where
$
r(x,z)=\int_{\mathbb{R}^n}e^{i2\pi \langle x,\xi \rangle }\sigma(z,\xi),
$
is understood in the distributional sense.
For every $z\in\mathbb{T}^n$ fixed, $r(z)(\cdot)=r(\cdot,z)$ is a distribution and the map $f\mapsto f\ast r(\cdot,z)=f\ast r(z)(\cdot)$ is a pseudo-differential operator with symbol $\sigma_{z}:\xi\mapsto \sigma(z,\xi).$ If $x\in\mathbb{T}^n$ and $f\in C^{\infty}(\mathbb{T}^n),$ $\sigma(x,D)f(x)=(f\ast r(x))(x).$ Moreover, for any $\beta\in \mathbb{N}^n$ with $|\beta|\leq [\frac{n}{p}]+1,$ the pseudo-differential operator  $f\ast [\partial^{\beta}_{x}r(\cdot,x)]|_{x=z}:=f\ast [\partial^{\beta}_{z}r(\cdot,z)]$ is a pseudo-differential operator with symbol $[\partial^{\beta}_{x}\sigma(\cdot,x)]|_{x=z}:=\partial^{\beta}_{z}\sigma(z,\cdot).$ By Lemma \ref{LE1}, every pseudo-differential operator $\sigma_{z,\beta}(D)$ with symbol $\sigma_{z,\beta}(\xi)=\partial^{\beta}_{z}\sigma(z,\xi)$ is $L^p(\mathbb{T}^n)$-bounded for $1\leq p< \infty$. Now, by the Sobolev embedding theorem, for $1\leq p<\infty,$ 
\begin{align*}
|\sigma_(x,D)f(x)|^p &\leq \sup_{z\in\mathbb{T}^n} |(f\ast r(z))(x)|^p
\leq \sum_{|\beta|\leq [\frac{n}{p}]+1}\int_{\mathbb{T}^n}|  (f\ast \partial^{\beta}_{z}r(z))(x) |^pdz
\end{align*}
Hence, by application of the Fubini theorem we get
\begin{align*}
\Vert \sigma(x,D)f \Vert^{p}_{L^p(\mathbb{T}^n)} & \leq C^p \sum_{|\beta|\leq [\frac{n}{p}]+1}\int_{\mathbb{T}^n}\int_{\mathbb{T}^n}|  (f\ast \partial^{\beta}_{z}r(z))(x) |^pdzdx\\
 & \leq C^p\sum_{|\beta|\leq [\frac{n}{p}]+1}\int_{\mathbb{T}^n}\int_{\mathbb{T}^n}|  (f\ast \partial^{\beta}_{z}r(z))(x) |^pdxdz\\
 &\leq C^p \sum_{|\beta|\leq [\frac{n}{p}]+1 }\sup_{z\in\mathbb{T}^n} \int_{\mathbb{T}^n}|  (f\ast \partial^{\beta}_{z}r(z))(x) |^pdx\\&= C^p\sum_{|\beta|\leq [\frac{n}{p}]+1 }\sup_{z\in\mathbb{T}^n}\Vert \sigma_{z,\beta}(D)f \Vert^p_{L^p(\mathbb{T}^n)} \\
 &\leq C^p\left(  \sum_{|\beta|\leq [\frac{n}{p}]+1 }\sup_{z\in\mathbb{T}^n}\Vert \sigma_{z,\beta}(D) \Vert^p_{B(L^p,L^p)}    \right)\Vert f\Vert^p_{L^p(\mathbb{T}^n)}.
\end{align*}
Thus,
$$ \Vert \sigma(x,D)f \Vert_{L^p(\mathbb{T}^n)}\leq C \left(  \sum_{|\beta|\leq [\frac{n}{p}]+1 }\sup_{z\in\mathbb{T}^n}\Vert \sigma_{z,\beta}(D) \Vert^p_{B(L^p,L^p)}    \right)^{\frac{1}{p}}\Vert f \Vert_{L^p(\mathbb{T}^n)}.  $$
\end{proof}
\begin{lemma}\label{lemmaIA}
Let $k\in\mathbb{R},$ $\varepsilon>0,$ $\omega$ be a function as in Proposition \ref{remi1} and $\sigma:\mathbb{T}^n\times \mathbb{R}^n\rightarrow \mathbb{C}$  be a symbol satisfying
\begin{equation}\label{w1}
|\partial^{\alpha}_{\xi} \partial_{x}^\beta\sigma(x,\xi)|\leq C\omega(\langle \xi \rangle^{-\varepsilon})\langle \xi \rangle^{k},
\end{equation}
for all $|\alpha|\leq N_1$ and $|\beta|\leq N_2.$ Let $\tilde{a}(x,\xi):=\sigma(x,\xi)|_{\mathbb{T}^{n}\times \mathbb{Z}^n}.$ Then
\begin{equation}\label{w2}
|\Delta^{\alpha}_{\xi} \partial_{x}^\beta\tilde{\sigma}(x,\xi)|\leq C'\omega(\langle \xi \rangle^{-\varepsilon})\langle \xi \rangle^{k},
\end{equation}
for all $|\alpha|\leq N_1$ and $|\beta|\leq N_2.$ Moreover, every symbol satisfying  \eqref{w2} is the restriction of a symbol on $\mathbb{T}^n\times \mathbb{R}^n$ satisfying \eqref{w1}.
\end{lemma}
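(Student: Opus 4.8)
The plan is to read Lemma \ref{lemmaIA} as the weighted analogue of the McLean--Ruzhansky--Turunen equivalence recorded in Corollary \ref{eqc'} and Proposition \ref{eqcc}: the only structural change is that the polynomial weight $\langle\xi\rangle^{m-\rho|\alpha|+\delta|\beta|}$ is replaced by $\omega(\langle\xi\rangle^{-\varepsilon})\langle\xi\rangle^{k}$, so the entire argument should reduce to checking that this new weight is stable under bounded shifts of the frequency variable. I would prove the two implications separately.

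\emph{Forward direction (\eqref{w1} $\Rightarrow$ \eqref{w2}).} First I would record the elementary identity writing a finite difference as an integral of a derivative: in one variable $\Delta_{\xi_j}g(\xi)=\int_0^1(\partial_{\xi_j}g)(\xi+te_j)\,dt$, and iterating $m$ times expresses $\Delta_{\xi_j}^m$ as an $m$-fold integral of $\partial_{\xi_j}^m$ over a bounded set. Composing over the coordinate directions, and using that $\Delta_\xi$ acts only in $\xi$ and commutes with $\partial_x^\beta$ while $\sigma$ is defined on all of $\mathbb{T}^n\times\mathbb{R}^n$ so that shifted points make sense, I obtain
\[
\Delta_\xi^{\alpha}\partial_x^{\beta}\tilde\sigma(x,\xi)=\int_{[0,1]^{|\alpha|}}\big(\partial_\xi^{\alpha}\partial_x^{\beta}\sigma\big)\big(x,\xi+\theta(t)\big)\,dt,
\]
where $\theta(t)\in\mathbb{R}^n$ depends linearly on $t$ and satisfies $|\theta(t)|\le|\alpha|\le N_1$. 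Taking absolute values and inserting \eqref{w1} gives
\[
|\Delta_\xi^{\alpha}\partial_x^{\beta}\tilde\sigma(x,\xi)|\le C\sup_{|\theta|\le|\alpha|}\omega(\langle\xi+\theta\rangle^{-\varepsilon})\,\langle\xi+\theta\rangle^{k}.
\]

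The heart of the matter is then the comparability of the weight under these bounded shifts. Since $|\theta|\le N_1$, Peetre's inequality furnishes a constant $A=A(N_1)\ge 1$ with $A^{-1}\langle\xi\rangle\le\langle\xi+\theta\rangle\le A\langle\xi\rangle$, whence $\langle\xi+\theta\rangle^{k}\le A^{|k|}\langle\xi\rangle^{k}$. For the modulus factor the same two-sided bound yields $\langle\xi+\theta\rangle^{-\varepsilon}\le A^{\varepsilon}\langle\xi\rangle^{-\varepsilon}$, and since $\omega$ is non-decreasing, $\omega(\langle\xi+\theta\rangle^{-\varepsilon})\le\omega(A^{\varepsilon}\langle\xi\rangle^{-\varepsilon})$, which is comparable to $\omega(\langle\xi\rangle^{-\varepsilon})$ because the dilation factor $A^\varepsilon$ is fixed. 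Combining the two estimates gives $|\Delta_\xi^{\alpha}\partial_x^{\beta}\tilde\sigma(x,\xi)|\le C'\omega(\langle\xi\rangle^{-\varepsilon})\langle\xi\rangle^{k}$, i.e.\ \eqref{w2}. I expect precisely this last comparison — absorbing a fixed dilation inside the non-decreasing $\omega$ — to be the main obstacle: for a general monotone $\omega$ one must exploit that the ratio $\langle\xi+\theta\rangle/\langle\xi\rangle$ stays in a fixed compact subinterval of $(0,\infty)$, and it is here that the admissibility (Dini/regularity and boundedness) of the weight is used; on the bounded range of small $\xi$ both sides are controlled directly by the boundedness of $\omega$.

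\emph{Converse direction (\eqref{w2} $\Rightarrow$ \eqref{w1}).} Here I would not reconstruct the extension from scratch but reuse the construction underlying Corollary \ref{eqc'} and Proposition \ref{eqcc} (the Ruzhansky--Turunen extension, Theorem 4.5.3 of \cite{Ruz}): one extends $\tilde\sigma(x,\cdot)$ from $\mathbb{Z}^n$ to $\mathbb{R}^n$ through convolution with the canonical interpolating function $\Phi$, after which summation by parts represents each derivative $\partial_\xi^{\alpha}\partial_x^\beta\sigma(x,\xi)$ as a rapidly decaying sum $\sum_{\eta\in\mathbb{Z}^n}(\Delta_\eta^{\gamma}\partial_x^\beta\tilde\sigma)(x,\eta)\,\Psi_\gamma(\xi-\eta)$ with $\Psi_\gamma$ Schwartz and $|\gamma|\le|\alpha|$. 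Applying \eqref{w2} to each term and invoking exactly the weight comparison of the forward step — now relating $\langle\eta\rangle$ to $\langle\xi\rangle$, with the rapid decay of $\Psi_\gamma$ absorbing the contribution of $\eta$ far from $\xi$ — produces the continuous bound \eqref{w1} for $|\alpha|\le N_1$, $|\beta|\le N_2$; the $x$-derivatives pass freely through the sum since $\partial_x^\beta$ does not interact with $\Phi$. Thus both implications follow once the single weight-comparison lemma of the second paragraph is established.
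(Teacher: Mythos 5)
Your proposal follows essentially the same route as the paper: the forward direction writes the finite difference as the $\xi$-derivative evaluated at intermediate points (you via the integral representation, the paper via the mean value theorem applied inductively) and then compares the weight $\omega(\langle\cdot\rangle^{-\varepsilon})\langle\cdot\rangle^{k}$ under bounded frequency shifts using Peetre's inequality and the monotonicity of $\omega$, while the converse reuses the Ruzhansky--Turunen interpolation kernel $\sigma(x,\xi)=\sum_{\eta}(\mathcal{F}_{\mathbb{R}^n}\theta)(\xi-\eta)\tilde{\sigma}(x,\eta)$ with summation by parts and the Schwartz decay of $\phi_\alpha$, exactly as in the paper. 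The weight-comparison step you correctly single out as the crux is the one the paper handles by invoking the boundedness of $\omega$ to get $\omega(\alpha t)\leq \alpha\,\omega(t)$ for $t>M$, so your argument is in substance the paper's own.
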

\begin{proof}
Let us consider $\sigma$ as in \eqref{w1}. By the mean value theorem, if $|\alpha|=1$ we have
\begin{align*}
\Delta_{\xi}^{\alpha}\partial^{\beta}_{x}\tilde{\sigma}(x,\xi)=\Delta_{\xi}^{\alpha}\partial^{\beta}_{x}{\sigma}(x,\xi)=\partial_{\xi}^{\alpha}\partial^{\beta}_{x}{\sigma}(x,\xi)|_{\xi=\eta}
\end{align*}
where $\eta$ is on the line $[\xi,\xi+\alpha].$ For a general multi-index $\alpha\in\mathbb{N}^n,$ it can proved by induction that
\begin{align*}
\Delta_{\xi}^{\alpha}\partial^{\beta}_{x}\tilde{\sigma}(x,\xi)=\partial_{\xi}^{\alpha}\partial^{\beta}_{x}{\sigma}(x,\xi)|_{\xi=\eta}
\end{align*}
for some $\eta\in Q:=[\xi_{1}\times \xi_{1}+\alpha_1]\times \cdots [\xi_{n}\times \xi_{n}+\alpha_n]. $ Hence, we have
\begin{align*}
|\Delta_{\xi}^{\alpha}\partial_{x}^{\beta}\tilde{\sigma}(x,\xi)| &=|\partial_{\xi}^{\alpha}\partial^{\beta}_{x}{\sigma}(x,\xi)|_{\xi =\eta\in Q}|\\
&\leq C\omega(\langle \eta \rangle^{-\varepsilon})\langle \eta \rangle^{k}\leq C'\omega(\langle \xi \rangle^{-\varepsilon})\langle \xi \rangle^{k}.
\end{align*}
So, we have proved the first part of the theorem. Now, let us consider a symbol $\tilde{\sigma}$ on $\mathbb{T}^n\times \mathbb{Z}^n$ satisfying \eqref{w2}. Let us consider $\theta$ as in Lemma 4.5.1 of \cite{Ruz}. Define the symbol $\sigma$ on $\mathbb{T}^n\times \mathbb{R}^n$ by
\begin{equation}
\sigma(x,\xi)=\sum_{\eta\in\mathbb{Z}^n}(\mathcal{F}_{\mathbb{R}^n}\theta)(\xi-\eta)\tilde{\sigma}(x,\eta).
\end{equation}
Same as in the proof of Theorem 4.5.3 of \cite{Ruz}, pag. 359,  we have
\begin{align*}
|\partial_{x}^{\beta}\partial_{\xi}^{\alpha}\sigma(x,\xi)|=\left| \sum_{\eta\in\mathbb{Z}^n}\phi_{\alpha}(\xi-\eta)\partial_{x}^{\beta}\Delta_{\xi}^{\alpha}\tilde{\sigma}(x,\eta) \right|
\end{align*}
where every $\phi_{\alpha}\in\mathcal{S}(\mathbb{R}^n)$ is a function as in Lemma 4.5.1 of \cite{Ruz}. So, we obtain 
\begin{equation}
|\partial_{x}^{\beta}\partial_{\xi}^{\alpha}\sigma(x,\xi)|\leq \sum_{\eta\in\mathbb{Z}^n}|\phi_{\alpha}(\eta)\omega(\langle \xi-\eta \rangle^{-\varepsilon})\langle \xi-\eta\rangle^{k}|
\end{equation}
Since $\omega$ is bounded, for some $M>0,$ and all $\alpha\geq 1$ we have
\begin{equation}\label{omega}|\omega(\alpha t)|\leq \alpha\omega(t),\,\,t>M.\end{equation}
By \eqref{omega}, the Peetre inequality and using that $\omega$ is increasing we have
\begin{align*}
|\partial_{x}^{\beta}\partial_{\xi}^{\alpha}\sigma(x,\xi)| &\leq \sum_{\eta\in\mathbb{Z}^n}|\phi_{\alpha}(\eta)\omega(\langle \xi-\eta \rangle^{-\varepsilon})\langle \xi-\eta\rangle^{k} |\\
&\leq \sum_{\eta\in\mathbb{Z}^n}|\phi_{\alpha}(\eta)\omega(\langle \xi\rangle^{-\varepsilon}\langle\eta \rangle^{\varepsilon})\langle \xi\rangle^{k}\langle \eta\rangle^{k} |\\
&\leq C \omega(\langle \xi\rangle^{-\varepsilon})\langle \xi\rangle^{k}\sum_{\eta\in\mathbb{Z}^n}|\phi_{\alpha}(\eta)\langle\eta \rangle^{\varepsilon+k}|.
\end{align*}
Since every $\phi_{\alpha}$ is a function in the Schwartz class we obtain
\begin{equation}
|\partial_{x}^{\beta}\partial_{\xi}^{\alpha}\sigma(x,\xi)| \leq C'\omega(\langle \xi\rangle^{-\varepsilon})\langle \xi\rangle^{k}.
\end{equation}
So, we end the proof.
\end{proof}
As a consequence of the results above we obtain the following result.\\
\\
\noindent \textbf{Theorem IA.}{ \it Let $0<\varepsilon\leq 1,$ $0\leq \rho\leq 1$  and let $\omega:[0,\infty)\rightarrow [0,\infty) $ be a non-decreasing and bounded function such that 
\begin{equation}
\int_{0}^1\omega(t)t^{-1}dt<\infty.
\end{equation}
Let  $\sigma(x,\xi):\mathbb{T}^n\times \mathbb{Z}^n\rightarrow\mathbb{C}$ be a symbol satisfying 
\begin{equation}
|\partial_{x}^{\beta}\Delta^{\alpha}_{\xi}\sigma(x,\xi)|\leq C\omega(\langle \xi \rangle^{-\varepsilon})\langle \xi \rangle^{-\frac{n}{2}(1-\rho)-\rho|\alpha|}
\end{equation}
for  $\alpha,\beta\in\mathbb{N}^n$ such that $|\alpha|\leq [\frac{n}{2}]+1,$ $|\beta|\leq [\frac{n}{p}]+1,$  then the corresponding periodic operator $\sigma(x,D)$  extends to a bounded operator on $L^p(\mathbb{T}^n)$ for all $1\leq p<\infty.$}\\
\\
\begin{proof}
First, let us denote by $\Psi^{m,\omega}_{\rho,\delta, N_1,N_2}(\mathbb{T}^n\times \mathbb{R}^n)$ and  $\Psi^{m,\omega}_{\rho,\delta, N_1,N_2}(\mathbb{T}^n\times \mathbb{Z}^n)$ to the set of operators with symbols satisfying \eqref{w1} and \eqref{w2} respectively. If we combine the Theorem 4.6.12 of \cite{Ruz} and Lemma \ref{lemmaIA} we obtain the equality of classes
\begin{equation}
\Psi^{m,\omega}_{\rho,\delta, N_1,N_2}(\mathbb{T}^n\times \mathbb{R}^n)=\Psi^{m,\omega}_{\rho,\delta, N_1,N_2}(\mathbb{T}^n\times \mathbb{Z}^n).
\end{equation}
Hence, in order to proof the boundedness of $\sigma(x,D)$ we only need to proof that 
\begin{equation}
\Psi^{-\frac{n}{2}(1-\rho),\,\omega}_{\rho,0, [\frac{n}{2}]+1,[\frac{n}{p}]+1}(\mathbb{T}^n\times \mathbb{R}^n)\subset\mathcal{L}(L^{p}(\mathbb{T}^n)),
\end{equation}
but, this fact has been proved in Theorem \ref{IA}.
\end{proof}

\begin{lemma}\label{LE2} Let  $0\leq \rho\leq 1, $ and $1< p<\infty.$ Let $\sigma_{1}: \mathbb{R}^n\rightarrow \mathbb{C}$ be a symbol satisfying
\begin{equation}
|\partial^{\alpha}_{\xi}\sigma_{1}(\xi)|\leq C\langle \xi \rangle^{-m_p-\rho|\alpha|}
\end{equation}
for all $\alpha$ and $\beta$ with $|\alpha|\leq [\frac{n}{2}]+1,$  then the corresponding periodic operator $\sigma(D)$ with symbol $\sigma(\xi)=\sigma_{1}(\xi)|_{ \mathbb{Z}^n}$ is a bounded operator on $L^p(\mathbb{T}^n).$
\end{lemma}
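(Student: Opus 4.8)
The plan is to mirror the proof of Lemma \ref{LE1}: first establish the $L^p(\mathbb{R}^n)$-boundedness of the Euclidean Fourier multiplier $T_{\sigma_1}$, and then transfer this boundedness to the torus through Stein's transference result (Proposition \ref{stein}). Since $\sigma_1$ depends only on $\xi$ and is of class $C^{[\frac{n}{2}]+1}$, it is in particular continuous at every point of $\mathbb{Z}^n$, so the hypotheses of Proposition \ref{stein} are automatically met once the $L^p(\mathbb{R}^n)$-bound is known. Thus the whole matter reduces to showing that $T_{\sigma_1}\in\mathcal{L}(L^p(\mathbb{R}^n))$ for every $1<p<\infty$.

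For the range $2\leq p<\infty$ I would invoke Proposition \ref{remi2} directly, taking $\delta=0$. Because $\sigma_1=\sigma_1(\xi)$ carries no $x$-dependence, the mixed estimate $|\partial_x^{\beta}\partial_\xi^{\alpha}\sigma_1(x,\xi)|\leq C\langle\xi\rangle^{-m_p-\rho|\alpha|}$ holds trivially for $\beta\neq 0$ (the left-hand side vanishes) and reduces to the assumed bound when $\beta=0$; hence all hypotheses of Proposition \ref{remi2} for $|\alpha|,|\beta|\leq[\frac{n}{2}]+1$ are satisfied and $T_{\sigma_1}$ is bounded on $L^p(\mathbb{R}^n)$. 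Note that $0\leq\delta=0\leq\rho\leq 1$ and $\delta<1$ are compatible with the standing assumption $0\leq\rho\leq 1$.

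The remaining, and main, point is the range $1<p<2$, which Proposition \ref{remi2} does not cover; here I would use duality together with the multiplier structure. Let $p'$ denote the conjugate exponent, $\frac{1}{p}+\frac{1}{p'}=1$, so that $2<p'<\infty$. Two elementary observations make the argument work: first, $m_{p'}=m_p$, since $|\frac{1}{p'}-\frac{1}{2}|=|\frac{1}{p}-\frac{1}{2}|$; and second, the conjugated symbol $\overline{\sigma_1}$ satisfies exactly the same derivative estimates as $\sigma_1$, because $|\partial_\xi^{\alpha}\overline{\sigma_1}(\xi)|=|\partial_\xi^{\alpha}\sigma_1(\xi)|$. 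Consequently, by the case already treated applied with the exponent $p'$, the multiplier $T_{\overline{\sigma_1}}$ is bounded on $L^{p'}(\mathbb{R}^n)$. Since the $L^2$-adjoint of the Fourier multiplier $T_{\overline{\sigma_1}}$ is precisely $T_{\sigma_1}$, the duality identity $\|T_{\sigma_1}\|_{L^p\to L^p}=\|T_{\overline{\sigma_1}}\|_{L^{p'}\to L^{p'}}$ yields the $L^p(\mathbb{R}^n)$-boundedness of $T_{\sigma_1}$. Applying Proposition \ref{stein} in both cases then delivers the $L^p(\mathbb{T}^n)$-boundedness of $\sigma(D)$ for all $1<p<\infty$. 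I expect the only delicate step to be keeping track of the identity $m_{p'}=m_p$ and verifying that complex conjugation preserves the symbol estimates, both of which are routine once isolated.
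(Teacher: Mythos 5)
Your proposal follows the paper's own route: establish the $L^p(\mathbb{R}^n)$-boundedness of $T_{\sigma_1}$ via Proposition \ref{remi2} (with $\delta=0$, the $x$-derivative estimates being vacuous for a multiplier) and then transfer to $L^p(\mathbb{T}^n)$ by Proposition \ref{stein}. The one place where you go beyond the paper is the range $1<p<2$: the paper's two-line proof simply cites Proposition \ref{remi2}, even though that proposition is stated only for $2\leq p<\infty$, whereas you patch this by duality, using that $(T_{\sigma_1})^{*}=T_{\overline{\sigma_1}}$, that $\overline{\sigma_1}$ obeys the same symbol estimates, and that $m_{p'}=m_p$ because $\lvert 1/p'-1/2\rvert=\lvert 1/p-1/2\rvert$. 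That duality step is correct and actually supplies a justification the paper leaves implicit, so your argument is, if anything, more complete than the original.
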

\begin{proof}
From Proposition \ref{remi2}, we deduce the $L^p(\mathbb{R}^n)-$boundedness of $T_{\sigma_1}.$ Finally, by Proposition  \ref{stein}, $\sigma(D)$ is a bounded operator on $L^{p}(\mathbb{T}^n).$
\end{proof}

\noindent \textbf{Theorem IB.}  { \it  Let  $0< \rho\leq 1, $ and $1< p<\infty.$ Let $\sigma:\mathbb{T}^n\times \mathbb{Z}^n\rightarrow \mathbb{C}$ be a symbol satisfying
\begin{equation}\label{cardona}
|\partial_{x}^{\beta}\Delta_{\xi}^{\alpha}\sigma(x,\xi) |\leq C\langle \xi\rangle^{-m_p-\rho|\alpha|},
\end{equation}
for all $\alpha$ and $\beta$ with $|\alpha|\leq [\frac{n}{2}]+1,$ $|\beta|\leq [\frac{n}{p}]+1,$ then  $\sigma(x,D)$ extends to a bounded operator on $L^p(\mathbb{T}^n).$}
\vspace{0.2cm}\\
\begin{proof} By Proposition \ref{eqcc} we only need to proof that 
\begin{equation}
\Psi^{-m_p}_{\rho,0,[\frac{n}{2}]+1,[\frac{n}{p}]+1}(\mathbb{T}^n\times \mathbb{R}^n)\subset \mathcal{L}(L^{p}(\mathbb{T}^n)),\,\,1<p<\infty.
\end{equation}
Hence, let us consider a symbol $\sigma_{1}$ on $\mathbb{T}^n\times \mathbb{R}^n$ satisfying \begin{equation}\label{interpolac}
|\partial_{x}^\beta\partial^{\alpha}_{\xi}\sigma_{1}(x,\xi)|\leq C\langle \xi \rangle^{-m_p-\rho|\alpha|},
\end{equation}for all  $|\alpha|\leq [\frac{n}{2}]+1,$ $|\beta|\leq [\frac{n}{p}]+1.$ 
From Lemma \ref{LE2}, we obtain the $L^{p}-$boundedness of every operator ${\sigma_1}_{z,\beta}(D)$ with symbol ${\sigma_1}_{z,\beta}(\xi)=(\partial_{x}^{\beta}\sigma_{1}(x,\xi))|_{x=z},$ $z\in\mathbb{T}^n.$ As in the proof of Theorem IA, an application of the Sobolev embedding theorem gives
$$ \Vert \sigma_{1}(x,D)f \Vert_{L^p(\mathbb{T}^n)}\leq C \left(  \sum_{|\beta|\leq [\frac{n}{p}]+1 }\sup_{z\in\mathbb{T}^n}\Vert {\sigma_1}_{z,\beta}(D) \Vert^p_{B(L^p,L^p)}    \right)^{\frac{1}{p}}\Vert f \Vert_{L^p(\mathbb{T}^n)}.$$

Hence $\sigma_1(x,D)$ is $L^p$-bounded. So we end the proof.
\end{proof}
\begin{remark}\label{mainremark}
We observe that conditions on the number of derivatives in Theorem \ref{Ruz-Delg} and Theorem IB agree, but the  behavior of the symbol derivatives in every theorem is different. In fact, \eqref{cardona} can be written as 
\begin{equation}
|\partial_{x}^{\beta}\Delta_{\xi}^{\alpha}\sigma(x,\xi) |\leq C\langle \xi\rangle^{-n(1-\rho)|1/p-1/2|-\rho|\alpha|},\,\,\,\,\,\,\,\,\,\,\,\,\,\,\,\,\,\,\,\,\,\,\,\,\,\,\,\,\,\,\,
\end{equation}
in  contrast with \eqref{DRR}:
\begin{equation}
|\partial_{x}^{\beta}\Delta_{\xi}^{\alpha}\sigma(x,\xi) |\leq C\langle \xi\rangle^{-\kappa(1-\rho)|1/p-1/2|-\rho|\alpha|+\delta(|\beta|-([\frac{n}{p}]+1))},
\end{equation}
where $|\beta|-([\frac{n}{p}]+1)\leq 0.$
\end{remark}
\begin{lemma}\label{Kol1} $($Kolmogorov's lemma$).$ Given an operator $S$ which is weak(1,1), $0<v<1,$ and a set $E$ of finite measure, there exists a $C>0$ such that
\begin{equation}\int_{E}|Sf(x)|^v\,dx\leq C\mu (E)^{1-v}\Vert f\Vert^v_{L^1(\mathbb{R}^n)}. 
\end{equation}
\end{lemma}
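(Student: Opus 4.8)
The plan is to use the distribution-function (layer-cake) representation of $\int_E|Sf|^v$ together with the two complementary bounds available for the distribution function of $Sf$ on $E$: the trivial bound $\mu(E)$ coming from the finiteness of $E$, and the weak $(1,1)$ bound coming from the hypothesis on $S$. Write $A$ for the weak $(1,1)$ constant, so that
\begin{equation}
\mu(\{x:|Sf(x)|>\lambda\})\leq \frac{A\Vert f\Vert_{L^1(\mathbb{R}^n)}}{\lambda},\qquad \lambda>0.
\end{equation}

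First I would express, for $0<v<1$,
\begin{equation}
\int_{E}|Sf(x)|^v\,dx=v\int_{0}^{\infty}\lambda^{v-1}\,\mu(\{x\in E:|Sf(x)|>\lambda\})\,d\lambda,
\end{equation}
and then estimate the inner distribution function in two ways: by $\mu(E)$, valid for every $\lambda$, and by $A\Vert f\Vert_{L^1}/\lambda$, the weak-type estimate trivially restricted to $E$. Next I would split the $\lambda$-integral at a threshold $\lambda_0>0$ to be chosen, using the bound $\mu(E)$ on the range $(0,\lambda_0)$ and the weak-type bound on $(\lambda_0,\infty)$. The first piece contributes $\mu(E)\lambda_0^{v}$, while the second, since $v-2<-1$, integrates to $\frac{vA}{1-v}\Vert f\Vert_{L^1}\lambda_0^{v-1}$; the convergence of this tail integral at infinity is precisely where the restriction $v<1$ enters.

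Finally I would optimize in $\lambda_0$ by taking $\lambda_0=\Vert f\Vert_{L^1}/\mu(E)$, which balances the two contributions and makes each of them equal, up to constants, to $\mu(E)^{1-v}\Vert f\Vert_{L^1}^{v}$. Adding them yields
\begin{equation}
\int_{E}|Sf(x)|^v\,dx\leq \left(1+\frac{vA}{1-v}\right)\mu(E)^{1-v}\Vert f\Vert_{L^1(\mathbb{R}^n)}^{v},
\end{equation}
which is the claimed inequality with $C=1+vA/(1-v)$.

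The argument presents no deep obstacle: it is entirely a quantitative interpolation between an $L^\infty$-type bound on the set $E$ and the weak $(1,1)$ bound. The only points requiring care are tracking the convergence of the tail integral, which forces $0<v<1$, and verifying that the chosen $\lambda_0$ genuinely equalizes the two estimates so that the resulting constant is finite and depends only on $v$ and the weak-type constant $A$.
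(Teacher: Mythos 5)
Your proof is correct: the layer-cake representation, the two complementary bounds on the distribution function, the split at $\lambda_0=\Vert f\Vert_{L^1}/\mu(E)$, and the convergence of the tail integral (which is where $v<1$ enters) all check out, yielding $C=1+vA/(1-v)$. The paper itself gives no argument for this lemma --- it only cites Lemma 5.16 of Duoandikoetxea --- and your argument is precisely the standard proof of that cited result, so there is nothing to compare beyond noting that you have supplied the proof the paper outsources.
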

\begin{proof}
For the proof of this result, see Lemma 5.16 of \cite{Duo}.
\end{proof}

\noindent \textbf{Theorem IC.}  { \it  Let $k:=[\frac{n}{2}]+1$. If $\sigma(\xi)$ is a periodic symbol on $\mathbb{Z}^n$ satisfying
\begin{equation}
|\Delta^{\alpha}_{\xi}\sigma(\xi)|\leq C_{\alpha}\langle \xi \rangle^{-|\alpha|}, \,\,\,|\alpha|\leq k, 
\end{equation}
then, the corresponding periodic operator $\sigma(D)$ is bounded from $L^1(\mathbb{T}^n)$ into $L^p(\mathbb{T}^n)$ for all $0<p<1.$}\vspace{0.2cm}
\begin{proof} By Theorem \ref{eqcc} we only need to prove that an operator $\sigma_{1}(D)$ with symbol $\sigma_{1}$ on $\mathbb{T}^n\times \mathbb{R}^n$ satisfying
\begin{equation}
|\partial^{\alpha}_{\xi}\sigma_1(\xi)|\leq C_{\alpha}\langle \xi \rangle^{-|\alpha|}, \,\,\,|\alpha|\leq k, 
\end{equation}
is a bounded operator from $L^1(\mathbb{T}^n)$ into $L^p(\mathbb{T}^n)$ for all $0<p<1.$
The multiplier $\sigma(D)$ is of weak type (1,1) on $\mathbb{R}^n$ (see \cite{Ste,Eli}). Now, by Kolmogorov's lemma with $E=\mathbb{T}^n,$  for $f\in L^{p}(\mathbb{T}^n)$ (which can be considered as a function on $\mathbb{R}^n$ equal to zero in $\mathbb{R}^n-\mathbb{T}^n$) we have
$$\int_{\mathbb{T}^n}|\sigma_{1}(x,D)f(x)|^p\,dx\leq C\Vert f\Vert^p_{L^1(\mathbb{R}^n)}=C\Vert f\Vert^p_{L^1(\mathbb{T}^n)}$$
which proves the boundedness of $\sigma_{1}(x,D).$ So, we end the proof.
\end{proof}

\begin{lemma}\label{lemma22} Let $2\leq p<\infty,$  and $k=[\frac{n}{2}]+1,$ let $p(x,\xi)$ be a symbol such that 
\begin{equation}|\partial_{x}^{\beta}p(x,\xi)|\leq C_{k},\,\,|\beta |\leq k. 
\end{equation}
Then $p(x,D):L^p(\mathbb{T}^n)\rightarrow L^q(\mathbb{T}^n)$ is a bounded operator, where $\frac{1}{p}+\frac{1}{q}=1.$
\end{lemma}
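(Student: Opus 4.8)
The plan is to reduce the statement to the $L^2$-boundedness theorem of Ruzhansky and Turunen (Theorem \ref{l2}) and then to exploit the finiteness of the measure of $\mathbb{T}^n$. The key observation is that the hypothesis $|\partial_x^\beta p(x,\xi)|\le C_k$ for $|\beta|\le k=[\frac n2]+1$ imposes no regularity whatsoever in the Fourier variable $\xi$, and is therefore exactly the hypothesis of Theorem \ref{l2}. Consequently $p(x,D)$ is already bounded from $L^2(\mathbb{T}^n)$ into $L^2(\mathbb{T}^n)$, with operator norm controlled solely by the constants $C_k$.

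First I would record the two continuous Lebesgue inclusions available on the probability space $\mathbb{T}^n$. Since $\Vert f\Vert_{L^{r_1}(\mathbb{T}^n)}\le \Vert f\Vert_{L^{r_2}(\mathbb{T}^n)}$ whenever $r_1\le r_2$ (an immediate consequence of H\"older's inequality together with $|\mathbb{T}^n|=1$), the hypothesis $2\le p<\infty$ yields the continuous embedding $L^p(\mathbb{T}^n)\hookrightarrow L^2(\mathbb{T}^n)$, while the conjugate relation $\frac1p+\frac1q=1$ forces $1<q\le 2$, which yields $L^2(\mathbb{T}^n)\hookrightarrow L^q(\mathbb{T}^n)$.

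Then I would simply sandwich the operator between these inclusions. For $f\in C^{\infty}(\mathbb{T}^n)$,
\begin{equation}
\Vert p(x,D)f\Vert_{L^q(\mathbb{T}^n)}\le \Vert p(x,D)f\Vert_{L^2(\mathbb{T}^n)}\le C\Vert f\Vert_{L^2(\mathbb{T}^n)}\le C\Vert f\Vert_{L^p(\mathbb{T}^n)},
\end{equation}
and a standard density argument extends the estimate to all of $L^p(\mathbb{T}^n)$, giving the desired $(L^p,L^q)$-boundedness.

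The argument carries no genuine analytic obstacle; the points that need attention are conceptual rather than computational. One should check that the constant furnished by Theorem \ref{l2} depends only on the $C_k$ (so that it is uniform and can be pulled out of the chain), and one should notice that the conjugate-exponent hypothesis is precisely what places $q$ below $2$ and $p$ above $2$, so that the two embeddings point in compatible directions and close the estimate. It is worth emphasizing that the same reasoning in fact proves boundedness $L^p\to L^r$ for every $r\le 2\le p$; the conjugate case $r=q$ is singled out here only because it is the endpoint needed for the Riesz--Thorin interpolation used in Theorem II.
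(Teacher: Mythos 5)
Your proof is correct, but it takes a shorter route than the paper. The paper does not invoke Theorem \ref{l2}; instead it re-derives the relevant $L^2$-type bound by hand: it writes the bilinear form $\langle p(x,D)u,g\rangle_{L^2(\mathbb{T}^n)}$ as a double sum over $\xi,\eta\in\mathbb{Z}^n$, integrates by parts in $x$ with the operator $L_x^k=(I-\tfrac{1}{4\pi^2}\mathcal{L}_x)^k$ to produce the decaying factor $\langle\xi-\eta\rangle^{-2k}$ (this is where the hypothesis on $k=[\tfrac n2]+1$ derivatives in $x$ enters), estimates the resulting discrete convolution by Young's inequality and Plancherel to get $|\langle p(x,D)u,g\rangle|\leq C\Vert u\Vert_{L^2}\Vert g\Vert_{L^2}$, then uses the embedding $L^p\hookrightarrow L^2$ on both factors and the duality characterization $\Vert Tu\Vert_{L^q}=\sup_{\Vert g\Vert_{L^p}\leq 1}|\langle Tu,g\rangle|$ to close. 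Your argument reaches the same place by treating the $L^2$ bound as a black box (Theorem \ref{l2}, already stated in the paper with exactly these hypotheses) and replacing the duality step by the embedding $L^2\hookrightarrow L^q$ applied to the output; the two mechanisms are equivalent on a probability space. What the paper's version buys is self-containedness and an explicit constant $C\,C_k\Vert\langle\cdot\rangle^{-2k}\Vert_{L^1(\mathbb{Z}^n)}$; what yours buys is brevity and the observation, which you correctly make, that the conclusion holds for every target exponent $r\leq 2$, the conjugate $q$ being singled out only for the Riesz--Thorin step in Theorem II. Your worry about uniformity of the constant in Theorem \ref{l2} is harmless but also unnecessary here, since only a single fixed symbol is involved.
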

\begin{proof} By the definition of periodic pseudo-differential operator, integration by parts and inversion formula, we have

\begin{align*} \langle p(x,D_{x})u, g \rangle_{L^2(\mathbb{T}^n)} &=\int_{T^n}\sum_{\xi\in\mathbb{Z}^n}e^{i2\pi x\xi}p(x,\xi)\widehat{u}(\xi)g(x)dx\\
&=\int_{T^n}\sum_{\xi\in\mathbb{Z}^n}\sum_{\eta\in\mathbb{Z}^n}e^{i2\pi x(\xi-\eta)}p(x,\xi)\widehat{u}(\xi)\overline{\widehat{g}(\eta)}dx\\
&=\sum_{\xi\in\mathbb{Z}^n}\sum_{\eta\in\mathbb{Z}^n}\int_{T^n}e^{i2\pi x(\xi-\eta)}p(x,\xi)\widehat{u}(\xi)\overline{\widehat{g}(\eta)}dx\\
&=\sum_{\xi\in\mathbb{Z}^n}\sum_{\eta\in\mathbb{Z}^n}\int_{T^n}\langle \xi-\eta\rangle^{-2k}e^{i2\pi x(\xi-\eta)}L^{k}_{x}p(x,\xi)\widehat{u}(\xi)\overline{\widehat{g}(\eta)}dx,
\end{align*}

where $L^q_{x}=(I-\frac{1}{4\pi^2}\mathcal{L}_{x})^q$
and $\mathcal{L}_{x}$ is the Laplacian in $x-$variables. Using the Young Inequality, we get

\begin{align*} |\langle p(x,D_{x})u, g \rangle_{L^2(\mathbb{T}^n)}| &\leq \sum_{\xi\in\mathbb{Z}^n}\sum_{\eta\in\mathbb{Z}^n}\langle \xi-\eta\rangle^{-2k}|L^{k}_{x}p(x,\xi)||\widehat{u}(\xi)||\widehat{g}(\eta)|\\
&\leq \sum_{\xi\in\mathbb{Z}^n}\sum_{\eta\in\mathbb{Z}^n}\langle \xi-\eta\rangle^{-2k}C_{k}\widehat{u}(\xi)||\widehat{g}(\eta)|\\
&= \sum_{\xi\in\mathbb{Z}^n}|\widehat{u}(\xi)|\sum_{\eta\in\mathbb{Z}^n}\langle \xi-\eta\rangle^{-2k}|\widehat{g}(\eta)|C_{k}\\
&= \sum_{\xi\in\mathbb{Z}^n}C_{k}|\widehat{u}(\xi)|\langle \cdot\rangle^{-2q}*|\widehat{g}(\cdot)|(\xi)C_{k}\\
&\leq C_{k} \Vert \widehat{u}\Vert_{L^2(\mathbb{Z}^n)} \Vert\langle \cdot\rangle^{-2k}|\widehat{g}| \Vert_{L^2(\mathbb{Z}^n)}\\
&\leq C_{k}\Vert u\Vert_{L^2(\mathbb{T}^n)} \Vert\langle \cdot\rangle^{-2k} \Vert_{L^1(\mathbb{Z})}\Vert{g} \Vert_{L^2(\mathbb{T}^n)}
\end{align*}
If $p\geq 2$ then the inclusion map $i:L^p(\mathbb{T}^n)\rightarrow L^2(\mathbb{T}^n)$ is continuous. So, for some $C>0$ we have $\Vert \cdot\Vert_{L^2(\mathbb{T}^n)}\leq C\Vert \cdot\Vert_{L^p(\mathbb{T}^n)}.$
So, we get $$ |\langle p(x,D_{x})u, g \rangle_{L^2(\mathbb{T}^n)}| \leq CC_{k} \Vert\langle \cdot\rangle^{-2k} \Vert_{L^1(\mathbb{Z})} \Vert u \Vert_{L^p(\mathbb{T}^n)} \Vert g \Vert_{L^p(\mathbb{T}^n)}. $$ 
Finally, 
\begin{align*}\Vert p(x,D_{x})u \Vert_{L^q(\mathbb{T}^n)} &=\sup  \{ |\langle p(x,D_{x})u,g \rangle|:{\Vert g \Vert_{L^{p}(\mathbb{T}^n)}}\leq 1 \} \\
 &\leq CC_{k}\Vert\langle \cdot\rangle^{-2k} \Vert_{L^1(\mathbb{Z})} \Vert u \Vert_{L^p(\mathbb{T}^n)} .
\end{align*}
\end{proof}
By using the previous lemma we can prove the following theorem on $(L^p,L^r)$-boundedness of periodic operators.\\
\\
\noindent \textbf{Theorem II.}  { \it  Let $2\leq p<\infty.$ If $\sigma(x,\xi)$ satisfies the hypotheses of Theorem \ref{p1}, then  $\sigma(x,D):L^p(\mathbb{T}^n)\rightarrow L^r(\mathbb{T}^n)$ is a bounded linear operator for all $1<q\leq r\leq p<\infty,$ where $\frac{1}{p}+\frac{1}{q}=1.$}\vspace{0.2cm}
\begin{proof}
Theorem \ref{p1} implies that, $\sigma(x,D):L^p(\mathbb{T}^n)\rightarrow L^p(\mathbb{T}^n)$ is a bounded operator for $1<p<\infty.$ That $\sigma(x,D):L^p(\mathbb{T}^n)\rightarrow L^q(\mathbb{T}^n)$ is a bounded operator is a consequence of Lemma \ref{lemma22}. Now, by Riesz-Thorin interpolation theorem we deduce that $\sigma(x,D):L^p(\mathbb{T}^n)\rightarrow L^r(\mathbb{T}^n)$ is a bounded operator for all $q\leq r\leq p<\infty,$ with $2\leq p<\infty.$
\end{proof}
In the following theorem we analyze the boundedness of periodic amplitude  operators on $L^1.$\\
\\
\noindent \textbf{Theorem III.}  { Let $0<\varepsilon,\delta <1.$  If $a(x,y,D)$ is a positive amplitude operator with symbol satisfying the following inequalities
\begin{equation}
|\partial_{x}^{\beta}\partial_{y}^{\alpha}a(x,y,\xi)|\leq C_{\alpha,\beta}\langle \xi\rangle^{\delta|\beta|-\varepsilon|\alpha|},\,\,\,|\alpha|,|\beta|\leq[\frac{n}{2}(1-\delta)^{-1}]+1,
\end{equation}
then $a(x,y,D)$ is bounded on $L^1(\mathbb{T}^n).$ } \vspace{0.3cm}\\

\begin{proof} If  the operator $a(x,y,D)$ is positive and  $f\geq 0$ we have that $$a(x,y,D)f\geq 0.$$ Hence,
\begin{align*}|a & (x,y,D)f(x)|=a(x,y,D)f(x)\\
&=\int\limits_{\mathbb{T}^n \times \mathbb{R}^n}e^{i2\pi (x-y)\xi}a(x,y,\xi)f(y)d\xi\,dy\\
&=\int\limits_{\mathbb{T}^n\times \mathbb{R}^n\times \mathbb{R}^n}e^{i2\pi (x-y)\xi}e^{i2\pi(\eta)y}a(x,y,\xi)d\xi\,dy\widehat{f}(\eta)d\eta\\
&=\int\limits_{\mathbb{T}^n\times \mathbb{R}^n\times \mathbb{R}^n}e^{i2\pi (x)\xi}e^{i2\pi(\eta-\xi)y}a(x,y,\xi)d\xi\,dy\widehat{f}(\eta)d\eta.
\end{align*}
\noindent Clearly $\langle \xi \rangle^{-2q}L^q_{x}e^{i2\pi x\xi}=e^{i2\pi x\xi},$ where $L^q_{x}=(I-\frac{1}{4\pi^2}\mathcal{L}_{x})^q$
and $\mathcal{L}_{x}$ is the Laplacian in $x-$variables. Using integration by parts successively we obtain,  

\begin{align*}\int\limits_{\mathbb{T}^n}| & a(x,y,D)f(x)|dx=\int\limits_{\mathbb{T}^n\times \mathbb{T}^n \times \mathbb{R}^n\times \mathbb{R}^n}e^{i2\pi (x)\xi}e^{i2\pi(\eta-\xi)y}a(x,y,\xi)d\xi\,dy\widehat{f}(\eta)d\eta\,dx\\
&=\int\limits_{\mathbb{T}^n\times \mathbb{T}^n \times \mathbb{R}^n\times \mathbb{R}^n}\langle \xi \rangle^{-2q}\langle \eta-\xi \rangle^{-2q} e^{i2\pi (x)\xi}e^{i2\pi(\eta-\xi)y} L^q_{x}L^{q}_{y}a(x,y,\xi)d\xi\,dy\widehat{f}(\eta)d\eta\,dx \\
&\leq \int\limits_{\mathbb{T}^n\times \mathbb{T}^n \times \mathbb{R}^n\times \mathbb{R}^n}\langle \xi \rangle^{-2q}\langle \eta-\xi \rangle^{-2q}  |L^q_{x}L^{q}_{y}a(x,y,\xi)|d\xi\,dy|\widehat{f}(\eta)|d\eta\,dx\\
&\leq \int\limits_{\mathbb{T}^n\times \mathbb{T}^n \times \mathbb{R}^n\times \mathbb{R}^n}\langle \xi \rangle^{-2q}\langle \eta-\xi \rangle^{-2q}  C_{q}\langle \xi\rangle^{\delta\cdot 2q} d\xi\,dy|\widehat{f}(\eta)|d\eta\,dx\\
&\leq \int\limits_{\mathbb{T}^n\times \mathbb{T}^n \times \mathbb{R}^n\times \mathbb{R}^n}\langle \xi \rangle^{2q(\delta-1)}\langle \eta-\xi \rangle^{-2q}  C_{q} d\xi\,dy\Vert f\Vert_{L^1(\mathbb{T}^n)} d\eta\,dx\\
&\leq C \Vert f\Vert_{L^1(\mathbb{T}^n)},\\
\end{align*} where $C=\int\limits_{\mathbb{R}^n} (\langle \cdot \rangle^{2q(\delta-1)}*\langle \cdot \rangle^{-2q} )(\eta) C_{q} d\eta.$ Clearly $C$ is finite for $\frac{n}{2}(1-\delta)^{-1}<q\leq[\frac{n}{2}(1-\delta)^{-1}]+1$.

Our proof of the $L^1-$boundedness is for non-negative $f,$ but this is sufficient since an arbitrary real function can be decomposed into its positive and negative parts, and complex functions into its real and imaginary parts.
\end{proof}

\begin{remark}
Positive operators in the form of Theorem $\textnormal{IV}$ arise with Bessel potentials of order $m,$ $m\in \mathbb{R}.$ We recall that for every $m\in\mathbb{R},$ the Bessel potential of order $m,$ denoted by $\langle D_{x}\rangle^m$ is the pseudo-differential operator with symbol $\sigma_{m}(\xi)=\langle \xi\rangle^m,$ $\xi\in \mathbb{Z}^n.$ 
\end{remark}

\begin{remark} There exists a connection between the $L^{p}$ boundedness of Fourier multipliers on compact Lie groups and its continuity on Besov spaces $B^r_{p,q}$. This fact was proved by the author in Theorem 1.2 of \cite{Duvan5}. In fact, the Lie group structure of the torus $\mathbb{T}^n$ implies that every periodic Fourier multiplier (i.e. a periodic pseudo-differential operator with symbol depending only on the dual variable $\xi\in\mathbb{Z}^n$) bounded from $L^{p_{1}}$ into $L^{p_{2}}$ also is bounded from $B^{r}_{p_1,q}$ into $B^{r}_{p_2,q},$ $r\in\mathbb{R}$ and $0<q\leq \infty.$ So, restrictions of our results to the case of symbols $\sigma(\xi)$ associated to Fourier multipliers give the boundedness of these operators on every Besov space $B^r_{p,q}(\mathbb{T}^n),$ $1<p<\infty. $
\end{remark}
\noindent \textbf{Acknowledgments:} I would like to thank the anonymous referee for his/her remarks which helped to improve the manuscript. This project was partially supported by Universidad de los Andes, Mathematics Department, Bogot\'a-Colombia.

\bibliographystyle{amsplain}

\begin{thebibliography}{99}

\bibitem{ag} Agranovich, M. S.: {Spectral properties of elliptic pseudodifferential operators on a closed curve} Funct. Anal. Appl.  \textbf{13}, 279-281 (1971)
 

\bibitem{va} Ashino, R., Nagase, M., Vaillancourt, R. {Pseudodifferential operators on $L^p(\mathbb{R}^n)$ spaces} Cubo., Vol 6, N 3. pp. 91-129. (2004)

\bibitem{ca1} Calder\'on, A., Vaillancourt, R.: {On the boundedness of pseudo-differential operators.} J. Math. Soc. Japan \textbf{23}, 374-378 (1971)

\bibitem{ca2} Calder\'on, A., Vaillancourt, R.: {A class of bounded pseudo-differential operators} Proc. Nat. Acad. Sci. Usa.  \textbf{69}, 1185-1187 (1972)
 
\bibitem{Duvan2} Cardona, D.: {Estimativos $L^2$ para una clase de operadores pseudodiferenciales definidos en el toro} Rev. Integr. Temas Mat., \textbf{31}(2), 147-152 (2013)

\bibitem{Duvan3}  Cardona, D.: Weak type (1, 1) bounds for a class of periodic pseudo-differential operators.
J. Pseudo-Differ. Oper. Appl., 5(4), (2014) 507-515.

\bibitem{Duvan4} Cardona, D.: H\"older-Besov boundedness for periodic pseudo-differential operators, to appear, J. Pseudo-Differ. Oper. Appl. 2016

\bibitem{Duvan5} Cardona, D.: Besov continuity for Multipliers defined on compact Lie groups. Palest. J. Math. Vol. 5(2) 35--44 (2016)

\bibitem{Profe} Delgado, J.: {$L^p$ bounds for pseudo-differential operators on the torus } Operators Theory, advances and applications.  \textbf{231},  103-116 (2012)

\bibitem{DR4} Delgado, J. Ruzhansky, M. $L^p$-bounds for pseudo-differential operators on compact Lie groups.  arXiv:1605.07027

\bibitem{Duo} Duoandikoetxea, J.: {Fourier Analysis}, Amer. Math. Soc.  (2001)

\bibitem{f} Folland, G.B.: Harmonic Analysis in phase space. Princeton University Press, (1989) 

\bibitem{fe} Fefferman, C.: $L^p-$bounds for pseudo-differential operators. Israel J. Math. \textbf{14}, 413-417 (1973)

\bibitem{Fischer2}  Fischer, V.:  H\"ormander condition for Fourier multipliers on compact Lie groups.  arXiv:1610.06348.

\bibitem{Ghaemi}  Ghaemi, M. B., Nabizadeh M. E.: A study on the inverse of pseudo-differential operators on S1. J. Pseudo-Differ. Oper. Appl. 7 (2016), no. 4, 511--517. 

\bibitem{Ghaemi2}  Ghaemi, M. B., Jamalpour B. M., Nabizadeh M. E.: A study on pseudo-differential operators on S1 and Z. J. Pseudo-Differ. Oper. Appl. 7 (2016), no. 2, 237--247.

\bibitem{Ghaemi3} Ghaemi, M. B., Nabizadeh M. E., Jamalpour B. M.: A study on the adjoint of pseudo-differential operators on S1 and Z. J. Pseudo-Differ. Oper. Appl. 6 (2015), no. 2, 197–203.

\bibitem{Hor1} H\"ormander, L.: { Pseudo-differential Operators and Hypo-elliptic equations } Proc. Symposium on Singular Integrals, Amer. Math. Soc. \textbf{10}, 138-183 (1967)

\bibitem{Hor2} H\"ormander, L.: { The Analysis of the linear partial differential operators} Vol. III. Springer-Verlag, (1985)


\bibitem{Mc} Mclean, W.M.: {Local and Global description of periodic pseudo-differential operators,} Math. Nachr. \textbf{150},  151-161 (1991)

\bibitem{s1} Molahajloo, S.: {A characterization of compact pseudo-differential operators on $\mathbb{S}^1$ } Oper. Theory Adv. Appl. Birkh\"user/Springer Basel AG, Basel. \textbf{213}, 25-29 (2011)

\bibitem{s2} Molahajloo, S., Wong, M.W.: {Pseudo-differential Operators on $\mathbb{S}^1$. } New developments in pseudo-differential operators, Eds. L. Rodino and M.W. Wong.  297-306 (2008)

\bibitem{m} Molahajloo, S., Wong, M. W. .: { Ellipticity, Fredholmness and spectral invariance of
pseudo-differential operators on $\mathbb{S}^1.$} J. Pseudo-Differ. Oper. Appl. \textbf{1} 183-205 (2010)

\bibitem{Na1} Nagase, M.: { The $L^p$ boundedness of pseudo-differential operators with non-regular symbols.} Comm. Partial Differential Equations. \textbf{2}, 1045-1061 (1977)

\bibitem{Na2} Nagase, M.: { On a class of $L^p-$bounded pseudo-differential operators.} Sci. Rep. College Gen. Ed. Osaka Univ.  1-7, (1984)

\bibitem{Na3} Nagase, M.: On some classes of Lp-bounded pseudodifferential operators. Osaka J. Math. 23 (1986), no. 2, 425–440.

\bibitem{Ruz} Ruzhansky, M., Turunen, V.: {Pseudo-differential Operators and Symmetries: Background Analysis and Advanced Topics } Birkha\"user-Verlag, Basel, (2010)

\bibitem{Ruz-2}Ruzhansky, M., Turunen, V.: {Quantization of Pseudo-Differential Operators on the Torus} J Fourier Annal Appl. Vol. 16. pp. 943-982. Birkh\"auser Verlag, Basel, (2010)

\bibitem{Ruz3} Ruzhansky, M. Wirth, J.:  $L^p$ Fourier multipliers on compact Lie groups, Mathematische Zeitschrift,  Vol 280, 621-642, (2015)


\bibitem{tur} Turunen, V., Vainikko, G.: {On symbol analysis of periodic pseudodifferential operators} Z. Anal. Anwendungen.  \textbf{17},  9-22 (1998)

\bibitem{Ste} Stein, E.: Harmonic Analysis. Princeton University Press. (1993)

\bibitem{Eli} Stein, E.: Harmonic analysis: real-variable methods, orthogonality, and oscillatory integrals. Princeton University Press, 1993.

\bibitem{LW} Wang, L. 
Pseudo-differential operators with rough coefficients. 
Thesis (Ph.D.)–McMaster University (Canada). ProQuest LLC, Ann Arbor, MI, 1997. 66 pp. ISBN: 978--0612--30120--7 

\bibitem{Wong} Wong, M. W.: {An introduction to Pseudo-Differential Operators } Second Edition, World Scientific, (1999)

\bibitem{Wong2} Wong, M. W.: { Discrete Fourier Analysis } Birkh\"auser, (2011)



\end{thebibliography}

\end{document}